\newtheorem{theorem}{Theorem}[section]
\newtheorem{lemma}[theorem]{Lemma}
\newtheorem{corollary}[theorem]{Corollary}
\theoremstyle{definition}
\newtheorem{definition}[theorem]{Definition}
\theoremstyle{remark}
\newtheorem{remark}[theorem]{Remark}
\numberwithin{equation}{section}
\newcommand{\norm}[1]{\lVert#1\rVert}
\newcommand{\cL}{\mathcal{L}}
\newcommand{\R}{\mathbb{R}}
\newcommand{\Z}{\mathbb{Z}}
\newcommand{\e}{\varepsilon}
\newcommand{\txt}[1]{\text{#1}}
\begin{document}

\title[Regularity theory of elliptic system in $\e$-scale flat domain]{Regularity theory of elliptic systems \\in $\e$-scale flat domains}


\author{Jinping Zhuge}
\address{Department of Mathematics, University of Chicago, Chicago, IL, 60637, USA.}
\curraddr{}
\email{jpzhuge@math.uchicago.edu}

\subjclass[2010]{35B27, 35B65}

\begin{abstract}
We consider the linear elliptic systems or equations in divergence form with periodically oscillating coefficients. We prove the large-scale boundary Lipschitz estimate for the weak solutions in domains satisfying the so-called $\e$-scale flatness condition, which could be arbitrarily rough below $\e$-scale. This particularly generalizes Kenig and Prange's work in \cite{KP15} and \cite{KP18} by a quantitative approach. Our result also provides a mathematical explanation on why the boundary regularity of the solutions of partial differential equations should be physically and experimentally expected even if the surfaces of mediums in real world may be arbitrarily rough at small scales. 
\end{abstract}
\keywords{Periodic Homogenization, Large-scale Lipschitz Estimate, Calder\'{o}n-Zygmund Estimate, $\e$-scale Flatness, Oscillating Boundary}

\maketitle
\section{Introduction}
\subsection{Motivation}
This paper is devoted to the boundary regularity of elliptic system/equation in a bounded domain whose boundary is arbitrarily rough at small scales. Precisely, let $D^\e$ be a bounded domain and $0\in \partial D^\e$. Consider the following linear elliptic system/equation
\begin{equation}\label{eq.main}
\left\{
\begin{aligned}
\nabla\cdot (A^\e \nabla u_\e) &= 0 \qquad &\txt{in } &D^\e_2, \\
u_\e  &=0  \qquad &\txt{on } & \Delta^\e_2,
\end{aligned}
\right.
\end{equation}
where $D^\e_t = D^\e\cap B_t(0)$, $\Delta^\e_t = \partial D^\e \cap B_t(0)$ and $A^\e(x) = A(x/\e)$. Throughout, we denote by $B_t(x)$ the ball centered at $x$ with radius $t$. As usual, in order to prove the uniform regularity for the solution $u_\e$, we must assume some self-similar structure for the coefficient matrix $A$, such as periodicity, almost-periodicity or randomness.

The pioneering work of the uniform regularity estimates in periodic homogenization dates back to the late 1980s by Avellaneda and Lin in a series of papers \cite{AL87,AL89,AL91}. In particular, by a compactness method, they proved that if $A$ is periodic and H\"{o}lder continuous, and the (non-oscillating) domain $D$ is $C^{1,\alpha}$, then
\begin{equation}\label{est.ptLip}
\norm{\nabla u_\e}_{L^\infty(D_1)} \le C\norm{\nabla u_\e}_{L^2(D_2)}.
\end{equation}
This estimate is called the uniform Lipschitz estimate since the constant $C$ is uniform in $\e>0$.
In the setting of almost-periodic or stochastic homogenization, the Lipschitz estimate for elliptic system in divergence form has been established in, e.g., \cite{S15,ASm16,AKM16,AGK16,ASh16,AM16,S17,Z17,SZ18}. On the other hand, for Neumann problem, the boundary Lipschitz estimate was first obtained by Kenig, Lin and Shen \cite{KLS13} with symmetric coefficients. The symmetry assumption was finally removed by Armstrong and Shen in \cite{ASh16}. Analogous results have been extended to a variety of equations, such as parabolic equations \cite{ABM18,GS15}, Stokes or elasticity systems \cite{GuS15,GX17,GZ19,GZ20}, higher-order equations \cite{NSX18,NX19}, nonlinear equations \cite{ASm14,ASm16,AFK19}, etc. We should point out that, in any work mentioned above, if we assume no smoothness on the coefficients, the pointwise Lipschitz estimate (\ref{est.ptLip}) should be replaced by the so called large-scale Lipschitz estimate
\begin{equation}\label{est.LSLip}
\bigg( \fint_{D_r} |\nabla u_\e|^2 \bigg)^{1/2} \le C\bigg( \fint_{D_2} |\nabla u_\e|^2 \bigg)^{1/2}, \qquad \text{for any } r\in (\e,2),
\end{equation}
where $C$ is independent of $r$ and $\e$. This estimate does not hold uniformly for $r\ll \e$ because, by a blow-up argument, the elliptic systems/equations may have unbounded $|\nabla u_\e|$. But (\ref{est.LSLip}) claims that $|\nabla u_\e|$ is bounded in the averaging sense above $\e$-scale. This phenomenon is physically natural as macroscopic (large-scale) smoothness of a solution for a PDE in an oscillating material should be expected if the material is well-structured microscopically.

Note that for all the aforementioned work, the domain is assumed to be smooth, namely, in $C^{1,\alpha}$ class. This class of domains is mathematically nearly sharp\footnote{The sharp class is $C^{1,\text{Dini}}$ domains \cite{Lie86}, yet $C^1$ domains are not sufficient.} for (\ref{est.ptLip}) even for Laplacian operator. However, it is not necessary for the estimate (\ref{est.LSLip}). Particularly, one may consider the boundaries that are rough or rapidly oscillating only at or below $\e$-scale, which appear naturally in reality. The study of PDEs with rapidly oscillating boundaries is currently an very active research area with various applications (particularly in fluid dynamics); see \cite{ABDG04,BG08,GD09,GM10,DG11,DP14} for example and reference therein. 
In terms of the uniform regularity in homogenization, some recent work has been done for the boundaries without any structure \cite{KP15,KP18,HP19}. In particular, Kenig and Prange \cite{KP18} proved the large-scale Lipschitz estimate when $D^\e$ is given by the graph of
\begin{equation}\label{est.bdry.KP}
x_d = \e \psi(x'/\e), \qquad \text{where } \psi \in W^{1,\infty}(\R^{d-1}).
\end{equation}
The originality of this result is that they made no assumption more than the Lipschitz regularity on the boundary (In their earlier work \cite{KP15}, $\psi$ is assumed to be in $C^{1,\alpha}$), which definitely bypasses the classical $C^{1,\alpha}$ assumption on the boundary. We point out that the Lipschitz estimate in \cite{KP15,KP18} was proved by the compactness method, following Avellaneda and Lin \cite{AL87}. Then more recently, Higaki and Prange \cite{HP19} used the similar idea to obtain the large-scale Lipschitz and $C^{1,\alpha}$ estimates for stationary Navier-Stokes equations over bumpy Lipschitz boundary given by (\ref{est.bdry.KP}). A nontrivial generalization of the boundary (\ref{est.bdry.KP}) has been studied by Gu and Zhuge for the system of nearly incompressible elasticity \cite{GZ20}, which particularly includes the graph given by
\begin{equation}\label{est.bdry.GZ}
x_d = \psi_0(x') + \e\psi_1(x'/\e), \quad \text{where } \psi_0\in C^{1,\alpha}(\R^{d-1}), \psi_1\in W^{1,\infty}(\R^{d-1}).
\end{equation}
This boundary could be viewed as a classical $C^{1,\alpha}$ graph with a small Lipschitz perturbation. Note that (\ref{est.bdry.GZ}) is still a Lipschitz graph.

The purpose of this paper is to generalize the large-scale Lipschitz estimate in rough domains without any regularity assumption (thus the boundary could be arbitrarily rough, including fractals and cusps), except for a quantitative large-scale flatness assumption defined below.
\begin{definition}\label{def.modulus}
	Let $D^\e$ be a bounded domain with $\e>0$. We say $D^\e$ is $\e$-scale flat with a modulus $\zeta:(0,1]\times (0,1]\mapsto [0,1]$, if for any $y\in \partial D^\e$ and $r\in (\e,1)$, there exists a unit (outward normal) vector $n_r = n_r(y) \in \R^d$ so that
	\begin{equation}\label{cond.flat}
	\begin{aligned}
	& B_r(y) \cap \{ x\in \R^d: (x-y)\cdot n_r < -r\zeta(r,\e/r) \} \\
	& \qquad \subset D^\e_r(y) \subset B_r(y) \cap \{ x\in \R^d: (x-y)\cdot n_r < r\zeta(r,\e/r) \},
	\end{aligned}
	\end{equation}
	where $D^\e_r(y) = D^\e \cap B_r(y)$.
\end{definition}

In other words, if $D^\e$ is $\e$-scale flat, $\Delta_r^\e$ is locally contained between two parallel hyperplanes whose distance is at most $2r \zeta(r,r/\e)$. Hence, (\ref{cond.flat}) may be viewed as a large-scale (since $r \ge \e$) quantitative Reifenberg flatness condition. The reason that we write $\zeta$ as a function of $r$ and $\e/r$, instead of $r$ and $\e$, may be seen in Definition \ref{def.esmooth} and the examples after. Clearly, Definition \ref{def.modulus} is a local property.

\begin{figure}[h]
	\begin{center}
		\includegraphics[scale =0.4]{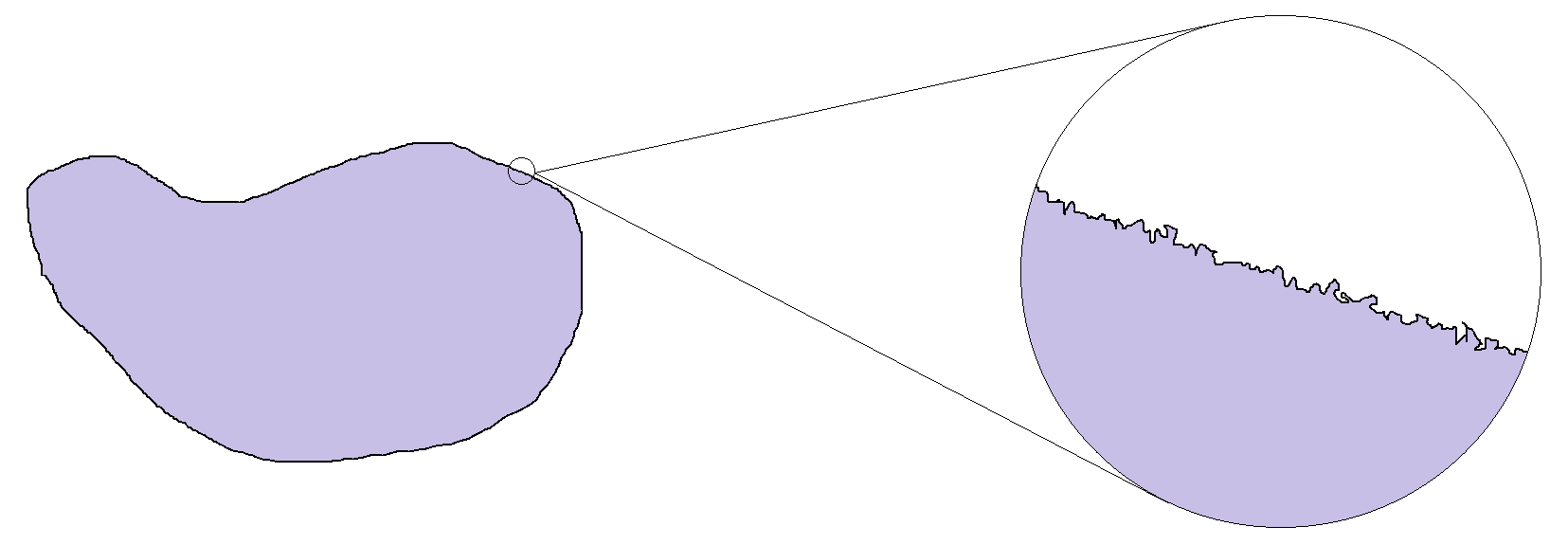}
	\end{center}
	\vspace{-1 em}
	\caption{A non-Lipschitz domain with arbitrary roughness at microscopic scales}\label{fig_1}
\end{figure}

Absolutely, the modulus $\zeta$ involved above will play a critical role in this paper and an additional quantitative condition is necessary for our purpose.

\begin{definition}\label{def.esmooth}
	Let $\eta:(0,1]\times (0,1] \to (0,1]$ be a continuous function. We say that $\eta$ is an ``admissible modulus'' if the following conditions hold:
	\begin{itemize}
		\item Flatness condition:
		\begin{equation}\label{cond.flatness}
		\lim_{t\to 0^+} \sup_{r,s\in (0,t)} \eta(r,s) = 0.
		\end{equation}

		\item A Dini-type condition:
		\begin{equation*}
		\lim_{t\to 0^+} \sup_{\e \in (0,t^2)} \int_{\e/t}^{t} \frac{\eta(r,\e/r)}{r} dr = 0.
		\end{equation*}
	\end{itemize}
	Moreover, we say $\eta$ is ``$\sigma$-admissible'' if $\eta^\sigma$ is an admissible modulus.
\end{definition}

We give three typical examples of $\sigma$-admissible moduli in the following.

\textbf{Example 1:} If $D^\e$ is uniformly $C^{1,\alpha}$, then $\zeta(r,s) = Cr^\alpha$. 

\textbf{Example 2:} If the boundary of $D^\e$ is locally given by the graph of $x_d = \e\psi(x'/\e)$ with $\psi \in C^0\cap L^\infty(\R^{d-1})$, then $\zeta(r,s) = Cs$. If $\psi(x')$ is uniformly $C^\beta$-H\"{o}lder continuous in $\R^{d-1}$ (not necessarily bounded), then $\zeta(r,s) = Cs^{1-\beta}$. Observe that either case here is a class much broader than (\ref{est.bdry.KP}).

\textbf{Example 3:} A typical microscopically oscillating boundary could be given by a graph $x_d = \psi_0(x') + \e\psi_1(x'/\e)$, where $\psi_0$ is a $C^{1,\alpha}$ function (capturing the macroscopic profile of the boundary) and $\psi_1$ satisfies either condition in Example 2 (capturing the microscopic details of the boundary). This is a combination of the previous two examples and $\zeta(r,s) = Cr^\alpha + C s^\beta$.

\subsection{Assumptions and main result}
We consider a family of oscillating elliptic operators in divergence form
\begin{equation}\label{eq.Le}
\nabla\cdot (A(x/\e) \nabla) :=  \frac{\partial}{\partial x_i} \bigg\{ a^{\alpha\beta}_{ij} \Big( \frac{x}{\e}\Big) \frac{\partial}{\partial x_j}\bigg\},
\end{equation}
with $1\le i,j\le d, 1\le \alpha,\beta \le m$ (Einstein's summation convention is used here and throughout), where $d$ represents the dimension and $m$ represents the number of equations. We assume that the coefficients matrix $A = (a^{\alpha\beta}_{ij})$ satisfies the following conditions:
\begin{itemize}
	\item Ellipticity: there exists $\Lambda>0$ such that
	\begin{equation*}
	\Lambda^{-1} |\xi|^2 \le a^{\alpha\beta}_{ij} \xi_i^\alpha \xi_j^\beta \le \Lambda |\xi|^2, \quad \txt{for any } \xi = (\xi_i^\alpha) \in \R^{m\times d}.
	\end{equation*}
	\item Periodicity:
	\begin{equation*}
	A(y+z) = A(y) \quad \text{for any }z \in \Z^d, y\in \R^d.
	\end{equation*}
\end{itemize}

Suppose $\{D_\e:\e>0\}$ is a family of bounded domains and $0\in \partial D_\e$. Let $B_t = B_t(0), D_t^\e = D^\e \cap B_t, \Delta_t^\e = \partial D_\e \cap B_t$ and $A^\e(x) = A(x/\e)$. We now give a definition for the weak solution of (\ref{eq.main}). We say $u_\e \in H^1(D_2^\e;\R^d)$ is a weak solution of (\ref{eq.main}) if for any $\phi\in C_0^\infty(D_2^\e)$,
\begin{equation*}
\int_{D_2^\e} A^\e \nabla u_\e \cdot \nabla \phi = 0
\end{equation*}
and $u_\e \varphi \in H_0^1(D_2^\e;\R^d)$ for any $\varphi \in C_0^\infty(B_2;\R^d)$.

Now, we state the main theorem of this paper.
\begin{theorem}\label{thm.main}
	Let $\e\in (0,1)$. Suppose $D^\e$ is $\e$-scale flat with a $\sigma$-admissible modulus $\zeta$ for some $\sigma\in (0,1/2)$. Let $u_\e$ be a weak solution of (\ref{eq.main}). Then for any $r\in (\e,1)$,
	\begin{equation}\label{est.Lip}
	\bigg( \fint_{D^\e_r} |\nabla u_\e|^2 \bigg)^{1/2} \le C\bigg( \fint_{D^\e_2} |\nabla u_\e|^2 \bigg)^{1/2},
	\end{equation}
	where $C$ depends only on $\Lambda, d, m$ and $\zeta$.
\end{theorem}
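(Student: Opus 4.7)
The plan is a quantitative Campanato-type excess-decay iteration adapted to the rough boundary via a three-way comparison argument. Define the normalized boundary excess
\begin{equation*}
H(r) := \frac{1}{r}\inf_{q \in N_r} \bigg(\fint_{D^\e_r} |u_\e(x) - q x|^2\, dx\bigg)^{1/2}, \qquad r\in[\e,1],
\end{equation*}
where $q$ ranges over those linear maps for which $qx$ vanishes on the approximating hyperplane $\Pi_r := \{x\cdot n_r = 0\}$ furnished by Definition \ref{def.modulus}. The target is a one-step improvement
\begin{equation*}
H(\theta r) \le \tfrac{1}{2} H(r) + C\bigl[\zeta(r,\e/r)^\sigma + (\e/r)^{1/2}\bigr]\Phi(r),
\end{equation*}
for a fixed small $\theta\in(0,1)$ and a non-increasing quantity $\Phi(r)\lesssim (\fint_{D^\e_2}|\nabla u_\e|^2)^{1/2}$. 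Iterating this dyadically from scale $1$ down to $\e$ and invoking the Dini-type condition of Definition \ref{def.esmooth} on $\zeta^\sigma$ (the term $(\e/r)^{1/2}$ being automatically summable on $[\e,1]$) would yield uniform control of $H$, from which \eqref{est.Lip} follows via a standard conversion between excess decay and averaged energy.

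To produce the one-step inequality I would fix $r\in[\e,1]$ and perform a three-way comparison. First approximate $u_\e|_{D^\e_r}$ by a solution $v_\e$ of the same oscillating system on the flat half-ball $H_r := B_r(y)\cap\{(x-y)\cdot n_r<0\}$ carrying zero Dirichlet data on $\Pi_r\cap B_r(y)$; then approximate $v_\e$ by the solution $w$ of the homogenized constant-coefficient system $\nabla\cdot(\widehat{A}\nabla w)=0$ on $H_r$ with the same flat-face data. The first approximation carries a \emph{geometric} error: the symmetric difference $D^\e_r \triangle H_r$ lies in a slab of thickness $2r\zeta(r,\e/r)$, and its energy contribution is controlled via Caccioppoli on annular regions combined with a Meyers-type self-improvement of integrability for $\nabla u_\e$. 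This is precisely where the loss from $\zeta$ to $\zeta^\sigma$ arises: H\"older's inequality on the thin slab converts the $L^{2+\delta}$ Meyers bound into an $L^2$ bound at the price of an exponent $\sigma=\delta/(2+\delta)<1/2$. The second (classical periodic homogenization) step on the smooth half-ball $H_r$ contributes the standard error of order $(\e/r)^{1/2}\|\nabla u_\e\|_{L^2(D^\e_r)}$ through a Dirichlet corrector construction.

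Since $w$ solves a constant-coefficient system on the flat half-ball with vanishing trace on $\Pi_r$, classical boundary $C^{1,1}$ estimates furnish a linear $L_\theta$ with $L_\theta x|_{\Pi_r}\equiv 0$ such that
\begin{equation*}
\bigg(\fint_{B_{\theta r}(y)\cap H_r}|w - L_\theta x|^2\bigg)^{1/2} \le C\theta^2 r\bigg(\fint_{H_r}|\nabla w|^2\bigg)^{1/2}.
\end{equation*}
Combining this decay with the two approximation errors, and transferring the estimate from $B_{\theta r}\cap H_r$ back to $D^\e_{\theta r}$ (which again involves the thin-slab Meyers bound), one obtains the desired improvement with $C\theta^2$ in place of $1/2$; taking $\theta$ small enough absorbs the constant. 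A summable-perturbation iteration lemma in the style of Shen then closes the induction at all dyadic scales between $\e$ and $1$.

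The main obstacle I anticipate is the geometric step. Because $\partial D^\e$ may be fractal or cusped inside the slab of thickness $r\zeta(r,\e/r)$, neither pointwise traces nor extensions across $\partial D^\e$ are available there, and every estimate on that strip must be performed as an $L^2$-average. Reconciling the Dirichlet trace of $u_\e$ on the rough piece of $\partial D^\e$ with that of $v_\e$ on the flat face $\Pi_r$ therefore has to be handled by an energy comparison using Meyers' higher integrability of $\nabla u_\e$, and it is this device that forces the exponent $\sigma\in(0,1/2)$ in the hypothesis. Once this is done cleanly, the rest is a well-oiled iteration, and in particular is what converts the qualitative compactness arguments of Kenig and Prange \cite{KP15,KP18} into a genuinely quantitative statement valid under the much weaker flatness assumption of Definition \ref{def.modulus}.
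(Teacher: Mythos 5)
Your overall architecture (excess decay for a directional linear profile vanishing on the approximating hyperplane, a two-stage comparison with an oscillating and then a homogenized solution on a flat domain, and a Shen-style summable-perturbation iteration) matches the paper's. But there is a genuine gap at the step you yourself flag as the main obstacle: you propose to control the energy of $\nabla u_\e$ on the thin slab $D^\e_r \triangle H_r$ by ``Meyers' higher integrability of $\nabla u_\e$.'' No such estimate is available here. The interior/boundary reverse H\"older inequality that feeds Gehring's lemma fails at scales below $\e$ precisely because $\partial D^\e$ is arbitrarily rough there (fractals, cusps), so $\nabla u_\e$ need not lie in $L^{2+\delta}$ in any neighborhood of the boundary. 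The paper's entire Section 2 exists to supply a substitute: a reverse H\"older inequality that holds only for balls of radius $\ge \e^*$, transferred to the averaged function $M_t[\nabla u_\e]$ (with $t\gtrsim\e$), for which Gehring and then a large-scale real-variable argument give $L^p$ bounds for all $p<\infty$. The slab estimate is then performed on $M_t[\nabla u_\e]$ via Fubini, not on $\nabla u_\e$ itself. Without this device your geometric-error step does not close.

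A second, related gap concerns the exponent. Even granting a Meyers-type $L^{2+\delta}$ bound, H\"older on the slab yields only the tiny, non-explicit exponent $\gamma = 1/2 - 1/(2+\delta)$ (your $\delta/(2+\delta)$). The theorem, however, assumes only that $\zeta^\sigma$ is Dini for \emph{some} given $\sigma\in(0,1/2)$, which may be much larger than $\gamma$; to make the perturbation series converge you need the approximation error to be $\zeta(r,\e/r)^\sigma$ with that specific $\sigma$. This forces the bootstrap from $p=2+\delta$ to arbitrary $p<\infty$ (Theorem \ref{thm.LSSZ} and Theorem \ref{thm.improved.approx} in the paper), which in turn uses the uniform Lipschitz bound for the comparison solution $w_\e$ on the Lipschitz domain $T_r^{\e,+}$. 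Two smaller points: your comparison half-space $H_r=\{(x-y)\cdot n_r<0\}$ does not contain $D^\e_r$, so $u_\e - v_\e$ is not in $H^1_0$ of either domain and the energy comparison cannot be run by testing; the paper shifts the hyperplane outward to $T_r^{\e,+}\supset D^\e_r$ and imposes $w_\e=u_\e$ (zero-extended) on $\partial T_r^{\e,+}$. And $\Phi(r)\lesssim \Phi(2)$ is the conclusion, not an admissible input: the iteration must be coupled with control of the slopes $|q_r|$ (the function $h$ in Lemma \ref{lem.iteration}), since $\Phi$ is recovered from $H$ plus the accumulated slope, not assumed monotone.
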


The above theorem justifies, from a mathematical point of view, a naturally expected phenomenon in real world: the macroscopic (large-scale) smoothness of the boundary implies the macroscopic (large-scale) smoothness of the solutions of PDEs, and the influence of the microscopic roughness of the boundary is physically invisible and experimentally undetectable. In other words, if the boundary of a domain is arbitrarily rough only at and below a certain small scale, then the solution (of a PDE) should be ``smooth'' near the boundary, in the averaging sense, above the same scale. 
Compared to Kenig and Prange's work \cite{KP15,KP18}, one of the main contributions of this paper is that we completely remove the Lipschitz regularity assumption on the boundary, which therefore could be arbitrarily rough at microscopic scales.

\begin{remark}
	The Reifenberg flat domains has been extensively studied in the past twenty years for the Calder\'{o}n-Zygmund estimates; see, e.g., \cite{BW04,BG08,BW08-2,BW10,MP12} and reference therein. The $\e$-scale flat domains defined above obviously are closely related and in some sense they could be viewed as a large-scale quantitative version of the Reifenberg flat domains. Because our domains have stronger flatness at large scales, instead of only Calder\'{o}n-Zgymund estimate, we have the Schauder estimate (\ref{est.Lip}).
\end{remark}

\begin{remark}
	To the best of our knowledge, Theorem \ref{thm.main} is the most general result (i.e., large-scale boundary Lipschitz estimate) so far in periodic homogenization for either oscillating or non-oscillating domains. Even for non-oscillating domains, Theorem \ref{thm.main} gives new results for $C^{1,\text{Dini}}$-type domains. More precisely, if the boundary of a domain is given by the graph $x_d = \psi(x')$ and the continuity modulus of $\nabla\psi$, denoted by $\eta(r)$, satisfies
	\begin{equation*}
	\int_{0}^{1} \frac{\eta(r)^\sigma}{r} dr < \infty,
	\end{equation*}
	for some $\sigma\in (0,1/2)$, then (\ref{est.Lip}) holds by Theorem \ref{thm.main}. This result is new since, as we have mentioned, previous results in homogenization all dealt with $C^{1,\alpha}$ domains with $\alpha>0$. The range of the exponent $\sigma$ may not be optimal, compared to the Lipschitz estimate for Laplacian; see, e.g., \cite{Lie86,MW12,AB18}. But this is an acceptable loss as we often encounter in homogenization theory.
\end{remark}


\begin{remark}
	In this final remark, we point out that, in scalar case (i.e., $m=1$), Theorem \ref{thm.main} may be strengthened to the domains with $\e$-scale convex points by the maximum principle. This will recover the Lipschitz estimate in convex domains for scalar elliptic equations. The precise definition and corresponding result are contained in Section 4.
\end{remark}

\subsection{Outline of the proof}
Now, we describe the key ideas of the proof of Theorem \ref{thm.main}. Instead of the compactness method, we will use a quantitative approach, the excess decay method, originating from \cite{ASm16}; see recent monographs \cite{AKM19,S18} for a comprehensive investigation. The key step in this approach is to establish an algebraic rate of convergence for the local homogenization problem, namely,
\begin{equation}\label{eq.Dre}
\left\{
\begin{aligned}
\nabla\cdot (A^\e \nabla u_\e) &= 0 \qquad &\txt{in } &D^\e_r, \\
u_\e  &=0  \qquad &\txt{on } & \Delta^\e_r,
\end{aligned}
\right.
\end{equation}
where $u_\e\in H^1(D^\e_r;\R^d)$ and $r\in (\e,1)$. However, since $\Delta_r^\e$ could be arbitrarily rough and has no self-similar structures, no a priori regularity estimate, such as the usual Meyers' estimate, is known for $\nabla u_\e$. This means that the rate of convergence may not be obtained obviously. Actually, the main challenge of this paper is to get rid of the arbitrary roughness of the boundary at small scales. To overcome this difficulty, we first approximate (\ref{eq.Dre}) by a local problem in a nicer domain $T_r^{\e,+} := B_r \cap \{ x\in \R^d: x\cdot n_r < r\zeta(r,\e/r) \}$. Note that $T_r^{\e,+} \supset D_r^\e$ is now a Lipschitz domain. Let $w_\e$ solves
\begin{equation}\label{eq.Tre}
\left\{
\begin{aligned}
\nabla\cdot (A^\e \nabla w_\e) &= 0 \qquad &\txt{in } &T^{\e,+}_r, \\
w_\e  &=u_\e  \qquad &\txt{on } & \partial T^{\e,+}_r,
\end{aligned}
\right.
\end{equation}
where $u_\e$ has been extended to the entire ball $B_r \supset T_r^{\e,+}$ with $u_\e \equiv 0$ in $B_r\setminus D_r^\e$. By the definition of ``admissible modulus'', $T_r^{\e,+}\setminus D_r^\e$ is contained in a slim layer. Hence, it is possible to estimate the error between $w_\e$ and $u_\e$ in terms of the ``admissible modulus''. Again, this estimate will depends on some a priori estimate of $\nabla u_\e$ which turns out to be an interesting byproduct of this paper. Actually, under condition (\ref{cond.flatness}) only, we are able to show the so-called large-scale Calder\'{o}n-Zygmund estimate (or the reverse H\"{o}lder inequality), namely
\begin{equation}\label{est.LSCZ.Br}
\bigg( \fint_{B_r} |M_t[\nabla u_\e]|^p \bigg)^{1/p} \le C\bigg( \fint_{B_{20r}} |M_t[\nabla u_\e]|^2 \bigg)^{1/2},
\end{equation}
for $t\in (\e/\e_0,\e_0)$, where
\begin{equation*}
M_t[F](x) = \bigg( \fint_{B_t(x)} |F|^{p_0} \bigg)^{1/{p_0}},
\end{equation*}
and $p_0 = 2d/(2+d)$. The averaging operator $M_t$ plays an essential role in getting rid of the boundary roughness at small scales and the estimate (\ref{est.LSCZ.Br}) is optimal in the sense that it does not hold uniformly for $t\ll \e$. The proof of (\ref{est.LSCZ.Br}) is roughly two steps. In the first step, we prove a large-scale Meyers' estimate with $p = 2+\delta$ and $\delta>0$ being tiny. This step has nothing to do with homogenization since it follows from the Gehring's inequality (a large-scale self-improvement property). In the second step, we take advantage of the uniform boundary Lipschitz estimate for $w_\e$ in periodic homogenization and a real-variable argument by Shen \cite[Chapter 3]{S18} to improve $p=2+\delta$ to any $p<\infty$. We should mention that a similar estimate in $C^{1,\alpha}$ domains was also obtained by Armstrong and Daniel \cite{AD16} (also see \cite[Chapter 7]{AKM19} and \cite[Corollary 4]{GNO20}). With (\ref{est.LSCZ.Br}) at our disposal, we can show that, for any $\sigma \in (0,1/2)$, $r\in (\e/\e_0,\e_0)$
\begin{equation}\label{est.Due-Dwe}
\bigg( \fint_{B_r} |\nabla u_\e - \nabla w_\e|^2 \bigg)^{1/2} \le C\big\{ \e/r + \zeta(r,\e/r) \big\}^\sigma \bigg( \fint_{B_{20r}} |\nabla u_\e|^2 \bigg)^{1/2}.
\end{equation}
This reduces the excess decay estimate of $u_\e$ into that of $w_\e$ with a controllable error.

The excess decay method to tackle the main theorem involves two critical quantities
\begin{equation}
\Phi(t)  =\frac{1}{t} \bigg( \fint_{D_t^\e } |u_\e|^2 \bigg)^{1/2}, \quad H(t) = \frac{1}{t}\inf_{q\in\R^d} \bigg( \fint_{D_{t}^\e } |u_\e - (n_t\cdot x)q|^2 \bigg)^{1/2}.
\end{equation}
Note that $\Phi(t)$ is almost equivalent to the $L^2$ average of $|\nabla u_\e|$ over $D_t^\e$, in view of the Poincar\'{e} and Caccioppoli inequalities. The structure of quantity $H(t)$ is novel and critical due to the lack of smoothness of the boundary. Recall that $n_t$ is defined in Definition \ref{def.modulus} which represents the approximate normal vector at $t$-scale. Thus $(n_t\cdot x)q$ is a directional linear function changing values only in the approximate normal direction. This kind of linear functions could approximate $w_\e$ well. Thus, using the rate of convergence for $w_\e$, the smoothness of the homogenized solution $w_0$ and (\ref{est.Due-Dwe}), we are able to show that there exists some constant $\theta\in (0,1/4)$ so that
\begin{equation}\label{est.EDS}
H(\theta r) \le \frac{1}{2}H(r) + C\big( \e/r+\zeta(r,\e/r)\big)^\sigma \Phi(20r),
\end{equation}
for any $r\in (\e/\e_0,\e_0)$. This estimate is called the excess decay estimate which leads to the main theorem by an iteration lemma (see Lemma \ref{lem.iteration}, which generalizes \cite[Lemma 8.5]{S17}) in which both conditions in Definition \ref{def.esmooth} are needed. Large part of the proof of (\ref{est.EDS}) nowadays has been rather standard in homogenization theory, although the special structure of $H(t)$ and the treatment of the discrepancy between the domains $D_r^\e$ and $T_r^{\e,+}$ give additional technical difficulties.

Finally, we shall emphasize that, throughout this paper, we will use $C$ and $c$ to denote constants that vary from line to line. Moreover, they depend at most on $d,m,p,\Lambda,\zeta$ and other non-scale parameters and never depend on $\e,r$ and $t$, etc.

\subsection{Organization of the paper}
The organization of the paper is as follows: In Section 2, we prove the large-scale Calder\'{o}n-Zygmund estimate and (\ref{est.Due-Dwe}). In Section 3, we prove Theorem \ref{thm.main}. In Section 4, we show the large-scale Lipschitz estimate in $\e$-scale convex domains for scalar elliptic equations.

\subsection*{Acknowledgement.} The author would like to thank professors Carlos Kenig and Zhongwei Shen for insightful discussions on the topic in this paper. The author also would like to thank the anonymous referee for careful reading on the manuscript and extremely helpful comments that significantly improve the quality of the paper.

\section{Calder\'{o}n-Zygmund Estimate}
This section is devoted to the large-scale Calder\'{o}n-Zygmund estimate and the approximation of the system (\ref{eq.main}) at mesoscopic scales.

\subsection{Large-scale self-improvement}
In this subsection, we assume $D^\e$ is an $\e$-scale flat domain with a modulus $\zeta$ satisfying (\ref{cond.flatness}) only. Let $u_\e\in H^1(D^\e_2;\R^d)$ be a weak solution of $\nabla\cdot (A^\e \nabla u_\e) = 0$ in $D^\e_2$ with vanishing Dirichlet boundary condition on $\Delta^\e_2$. Note that we may extend the $u_\e$ naturally to $B_2$ by
\begin{equation*}
\widetilde{u}_\e(x)= \left\{ 
\begin{aligned}
&u_\e(x) &\quad &\text{if } x\in D_2^\e\\
&0 &\quad &\text{if } B_2\setminus D_2^\e.
\end{aligned}
\right.
\end{equation*}
However, for convenience, we will still denote the extended function $\widetilde{u}_\e$ by $u_\e$. Note that $u_\e\in H^1(B_2;\R^d)$ and $\nabla u_\e = 0$ in $B_2\setminus D_2^\e$.

The following is the well-known Caccioppoli inequality.

\begin{lemma}[Caccioppoli inequality]\label{lem.Cacc.D}
	Let $t\in (0,1)$ and $B_{2t}(x) \subset B_2(0)$. Then
	\begin{equation}
	\bigg( \fint_{B_t(x)} |\nabla u_\e|^2 \bigg)^{1/2} \le  \frac{C}{t} \bigg( \fint_{B_{2t}(x)}|u_\e|^2 \bigg)^{1/2},
	\end{equation}
	where $C$ depends only on $d,m$ and $\Lambda$.
\end{lemma}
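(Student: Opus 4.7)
The plan is to prove this by the classical Caccioppoli argument, testing the weak formulation against $\phi^2 u_\e$ where $\phi$ is a suitable cutoff. The only mild subtlety is justifying the test function in view of the possibly rough boundary; this is handled by the assumption in the paper's definition of weak solution that $u_\e \varphi \in H_0^1(D_2^\e;\R^d)$ for every $\varphi \in C_0^\infty(B_2;\R^d)$.

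First I would pick $\phi \in C_0^\infty(B_{2t}(x))$ with $0\le \phi \le 1$, $\phi \equiv 1$ on $B_t(x)$ and $|\nabla \phi|\le C/t$. Since $B_{2t}(x)\subset B_2(0)$, the function $\phi$ may be regarded as an element of $C_0^\infty(B_2;\R^d)$, so by the definition of weak solution, $\phi^2 u_\e \in H_0^1(D_2^\e;\R^d)$ and is admissible in the weak formulation:
\begin{equation*}
\int_{D_2^\e} A^\e \nabla u_\e \cdot \nabla(\phi^2 u_\e) = 0.
\end{equation*}
Next I would expand $\nabla(\phi^2 u_\e) = \phi^2 \nabla u_\e + 2\phi\, \nabla\phi\otimes u_\e$ and invoke the ellipticity lower bound $\Lambda^{-1}\int \phi^2 |\nabla u_\e|^2 \le \int A^\e \nabla u_\e \cdot \phi^2 \nabla u_\e$, which after rearrangement yields
\begin{equation*}
\Lambda^{-1}\int \phi^2 |\nabla u_\e|^2 \le 2\Lambda \int \phi\, |\nabla u_\e|\, |\nabla \phi|\, |u_\e|.
\end{equation*}
Then I would apply Young's inequality with a small parameter to split the right-hand side as $\delta \int \phi^2 |\nabla u_\e|^2 + C_\delta \int |\nabla\phi|^2 |u_\e|^2$, absorb the first term into the left, and use $|\nabla\phi|\le C/t$ together with $\phi\equiv 1$ on $B_t(x)$ to obtain
\begin{equation*}
\int_{B_t(x)} |\nabla u_\e|^2 \le \frac{C}{t^2} \int_{B_{2t}(x)} |u_\e|^2.
\end{equation*}
Dividing by $|B_t(x)|$ (and noting $|B_{2t}(x)| = 2^d |B_t(x)|$) gives the averaged form stated in the lemma. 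Because $\nabla u_\e \equiv 0$ on $B_2\setminus D_2^\e$ by the zero extension, all integrals above may equivalently be taken over $B_t(x)\cap D_2^\e$ and $B_{2t}(x)\cap D_2^\e$, so the argument is insensitive to whether $B_{2t}(x)$ crosses $\partial D^\e$.

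I do not foresee any real obstacle here: the only point that would need a brief justification is that the test function $\phi^2 u_\e$ is legitimate, and that is immediate from the definition of weak solution given just above the lemma statement. The constant $C$ depends only on the ellipticity constant $\Lambda$ and on dimension through the choice of $\phi$, which is consistent with the claim.
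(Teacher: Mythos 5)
Your proof is correct and is exactly the standard boundary Caccioppoli argument that the paper invokes without proof (the lemma is stated as ``well-known''). You correctly identify the one point that needs care in this rough-boundary setting --- the admissibility of $\phi^2 u_\e$ as a test function, which follows from the clause $u_\e\varphi\in H_0^1(D_2^\e;\R^d)$ in the paper's definition of weak solution together with the density of $C_0^\infty(D_2^\e)$ in $H_0^1(D_2^\e)$ --- and the zero-extension remark correctly unifies the interior and boundary cases.
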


By Definition \ref{def.modulus} and (\ref{cond.flatness}), we may assume $r\zeta(r,\e/r)$ is a nondecreasing function and $\zeta(r,\e/r) \le 1/2$ for $r\in (\e,1)$ without loss of generality.

\begin{lemma}\label{lem.RHolder}
	Let $\e^*:= \e\zeta(\e,1)$ and $t\in (\e^*,1)$ and $B_{4t}(x) \subset B_2(0)$. Then 
	\begin{equation}\label{est.reverse.Bt}
	\bigg( \fint_{B_t(x)} |\nabla u_\e|^2 \bigg)^{1/2} \le C\bigg( \fint_{B_{4t}(x)}|\nabla u_\e|^{p_0} \bigg)^{1/{p_0}},
	\end{equation}
	where $1/{p_0} = 1/2+1/d$.
\end{lemma}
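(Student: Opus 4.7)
The plan is to combine the Caccioppoli inequality of Lemma~\ref{lem.Cacc.D} with an appropriate Sobolev--Poincar\'e inequality, splitting into two cases depending on whether the ball $B_{2t}(x)$ lies in the interior of $D^\e$ or meets $\partial D^\e$. The exponent $p_0 = 2d/(d+2)$ is exactly the one whose Sobolev conjugate equals $2$, so the estimate we want is the standard reverse H\"older bound for gradients of solutions.

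In the interior case $B_{2t}(x)\subset D^\e$, the function $u_\e - c$ is still a weak solution on $B_{2t}(x)$ for any constant $c\in\R^m$. Taking $c$ to be the mean of $u_\e$ over $B_{2t}(x)$ and applying Lemma~\ref{lem.Cacc.D} together with the classical Sobolev--Poincar\'e inequality immediately yields $(\fint_{B_t(x)}|\nabla u_\e|^2)^{1/2} \le C(\fint_{B_{2t}(x)}|\nabla u_\e|^{p_0})^{1/p_0}$, which is stronger than the claim. In the boundary case $B_{2t}(x)\cap\partial D^\e\neq\emptyset$, I would use that the zero extension of $u_\e$ to $B_2$ lies in $H^1(B_2;\R^m)$ and vanishes on the set $E := B_{2t}(x)\setminus D^\e$. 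Lemma~\ref{lem.Cacc.D} applied directly to $u_\e$ gives $(\fint_{B_t(x)}|\nabla u_\e|^2)^{1/2} \le Ct^{-1}(\fint_{B_{2t}(x)}|u_\e|^2)^{1/2}$, and the Sobolev--Poincar\'e inequality for functions vanishing on a set of positive relative density $\lambda = |E|/|B_{2t}(x)|$ closes the estimate, provided $\lambda$ is bounded below by a positive constant depending only on $d$ and $\zeta$.

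The main obstacle, and the origin of the threshold $\e^\ast = \e\zeta(\e,1)$, is verifying the uniform density bound $|E|\ge \lambda|B_{2t}(x)|$ for all $t\in(\e^\ast,1)$. Fix $y\in B_{2t}(x)\cap\partial D^\e$. By (\ref{cond.flatness}), one may assume (absorbing a $\zeta$-dependent constant into $C$) that $\zeta(r,\e/r)\le 1/2$ on the relevant range of scales, so that the wedge $\{z\in B_r(y) : (z-y)\cdot n_r(y) > r\zeta(r,\e/r)\}$ occupies a fixed fraction of $B_r(y)$ and is entirely disjoint from $D^\e$. For $t\ge\e$ I apply Definition~\ref{def.modulus} at scale $r = t$, and a direct volume computation yields $|E|\ge c_d|B_{2t}(x)|$. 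For $t\in(\e^\ast,\e]$, Definition~\ref{def.modulus} is unavailable at scale $t$, so I would apply it at scale $r=\e$ instead: the complement of $D^\e$ inside $B_\e(y)$ contains a half-ball of $B_\e(y)$ minus a slab of thickness $2\e\zeta(\e,1) = 2\e^\ast$, and the hypothesis $t > \e^\ast$ guarantees that this slab is thin enough that $B_{2t}(x)$ still captures a dimensionally fixed portion of the complement. Combining the two sub-cases yields the desired density lower bound.
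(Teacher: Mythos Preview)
Your overall strategy is exactly the paper's: Caccioppoli plus Sobolev--Poincar\'e, with an interior case (subtracting the mean) and a boundary case (using the zero set of the extended $u_\e$). Your treatment of the sub-range $t\in(\e^\ast,\e]$ via the slab at scale $\e$ is also the right idea; the paper encodes the same thing through the standing assumption that $r\zeta(r,\e/r)$ is nondecreasing.

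There is, however, a genuine gap in your boundary case. You claim the density lower bound $|E|\ge c_d|B_{2t}(x)|$ for $E=B_{2t}(x)\setminus D^\e$, but this is false: take $D^\e$ to be the half-space $\{z_1<0\}$ and $x=(-2t+\delta,0,\dots,0)$ with $\delta>0$ small. Then $B_{2t}(x)\cap\partial D^\e\neq\emptyset$, yet $E$ is a spherical cap of height $\delta$, so $|E|/|B_{2t}(x)|\to 0$ as $\delta\to 0$. The issue is that the boundary point $y$ you pick may lie near $\partial B_{2t}(x)$, so applying Definition~\ref{def.modulus} at scale $t$ centered at $y$ gives a complement region in $B_t(y)$ that need not lie inside $B_{2t}(x)$.

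The fix is to enlarge once more to $B_{4t}(x)$, which is precisely why the statement has $B_{4t}$ on the right. If $y\in B_{2t}(x)\cap\partial D^\e$, then $B_{2t}(y)\subset B_{4t}(x)$; applying the flatness condition at $y$ at scale $2t$ (when $2t>\e$) or at scale $\e$ (when $2t\le\e$, using $t>\e^\ast$ so that the slab of thickness $2\e^\ast$ removes less than half of $B_{2t}(y)$) gives $|B_{2t}(y)\setminus D^\e|\ge c|B_{2t}(y)|$, hence $|B_{4t}(x)\setminus D^\e|\ge c'|B_{4t}(x)|$. With this adjustment your argument goes through and coincides with the paper's proof (which also separates out the trivial case $B_t(x)\subset B_2\setminus D^\e$).
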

\begin{proof}
	We consider three cases separately.
	
	Case 1: $B_t(x) \subset B_2(0)\setminus D^\e$. This is trivial.
	
	Case 2: $B_{2t}(x) \subset D^\e$. The classical interior Caccioppoli inequality and the Sobolev-Poincar\'{e} inequality \cite[pp 164]{GT01} imply that
	\begin{equation*}
	\bigg( \fint_{B_t(x)} |\nabla u_\e|^2 \bigg)^{1/2} \le C\inf_{k \in \R} \frac{1}{t} \bigg( \fint_{B_{2t}(x)} |u_\e - k|^2 \bigg)^{1/2} \le C \bigg( \fint_{B_{2t}(x)} |\nabla u_\e|^{p_0} \bigg)^{1/{p_0}}.
	\end{equation*}
	
	Case 3: $B_{2t}(x) \cap \Delta_2 \neq \emptyset$. In this case, by our assumptions that $\zeta(r,\e/r)\le 1/2$ and $r\zeta(r,\e/r)$ is nondecreasing, $u_\e\equiv 0 $ on $B_{4t}(x) \setminus D^\e$ and $|B_{4t}(x) \setminus D^\e| \ge c|B_{4t}(x)|$ for $t\in (\e^*,1)$. Now, by Lemma \ref{lem.Cacc.D} and the Sobolev-Poincar\'{e} inequality \cite[pp 164]{GT01}, one arrives at
	\begin{equation*}
	\begin{aligned}
	\bigg( \fint_{B_t(x)} |\nabla u_\e|^2 \bigg)^{1/2} &\le C\bigg( \fint_{D^\e_{2t}(x)} |\nabla u_\e|^2 \bigg)^{1/2} \\
	&\le \frac{C}{t} \bigg( \fint_{B_{4t}(x)} | u_\e|^2 \bigg)^{1/2} \\
	&\le C\bigg( \fint_{B_{4t}(x)} | \nabla u_\e|^{p_0} \bigg)^{1/{p_0}},
	\end{aligned}
	\end{equation*}
	where $C$ depends only on $d,m$ and $\Lambda$.
\end{proof}

The above inequality is a reverse H\"{o}lder inequality. The problem is that it does not hold for all range of $t\in (0,1)$. If it does, the Gehring's inequality directly implies that $|\nabla u_\e| \in L^{2+\delta}(B_{r/2})$ for some $\delta >0$ (the usual Meyers' estimate). To overcome this difficulty, we will establish a ``large-scale self-improving property'' which is sufficient for our application.

Let $u_\e$ and $p_0$ be as above. For $t\in (\e^*,1)$, define
\begin{equation*}
M_t[F](x) = \bigg( \fint_{B_t(x)} |F|^{p_0} \bigg)^{1/{p_0}}.
\end{equation*}
\begin{lemma}\label{lem.RHolder.Mt}
	Fix $t\in (\e^*, 1)$. 
	
	(i) For any $s\in (0,t)$ and $B_s(x) \subset B_1(0)$,
	\begin{equation*}
	M_t[\nabla u_\e](x) \le C\bigg( \fint_{B_s(x)} |M_t[\nabla u_\e]|^{p_0} \bigg)^{1/{p_0}}.
	\end{equation*}
	
	(ii) For $s\in (t,1)$ and $B_{10s}(x) \subset B_{1}(0)$,
	\begin{equation*}
	\bigg( \fint_{B_s(x)} |M_t[\nabla u_\e]|^2 \bigg)^{1/2} \le  C\bigg( \fint_{B_{10s}(x)} |M_t[\nabla u_\e]|^{p_0} \bigg)^{1/{p_0}} 
	\end{equation*}
\end{lemma}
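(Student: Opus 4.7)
Both inequalities will follow from unfolding the definition $M_t[\nabla u_\e](y)^{p_0}=\fint_{B_t(y)}|\nabla u_\e|^{p_0}$ and exchanging the order of integration via Fubini. Part (i) will then reduce to an elementary geometric estimate for the intersection of two balls, while part (ii) will additionally invoke the reverse Hölder inequality of Lemma \ref{lem.RHolder} at scale $2s$.

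For part (i), Fubini gives
\begin{equation*}
\fint_{B_s(x)}M_t[\nabla u_\e](y)^{p_0}\,dy \;=\; \frac{1}{|B_s|\,|B_t|}\int_{\R^d}|\nabla u_\e(z)|^{p_0}\,|B_s(x)\cap B_t(z)|\,dz.
\end{equation*}
The crux is the geometric claim that for $s\le t/2$ and any $z\in B_t(x)$ one has $|B_s(x)\cap B_t(z)|\ge c(d)|B_s|$. When $|z-x|\le t-s$ this is immediate from $B_s(x)\subset B_t(z)$; in the remaining regime $|z-x|\in(t-s,t)$ a direct computation from the inequality $|y-z|<t$ shows that the intersection contains the spherical cap $\{y\in B_s(x):(y-x)\cdot(z-x)/|z-x|>s/2\}$, whose volume is a dimension-only fraction of $|B_s|$. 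Restricting the Fubini integral to $z\in B_t(x)$ and applying this lower bound gives (i) for $s\le t/2$. For $s\in(t/2,t)$ one reduces to the previous case via $B_{t/2}(x)\subset B_s(x)$, yielding $\fint_{B_s(x)}M_t^{p_0}\ge 2^{-d}\fint_{B_{t/2}(x)}M_t^{p_0}\ge 2^{-d}c(d)M_t[\nabla u_\e](x)^{p_0}$.

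For part (ii), the scheme is: pass from $M_t$ averages at scale $s$ to $|\nabla u_\e|$ averages, upgrade the integrability through Lemma \ref{lem.RHolder}, then pass back to $M_t$ averages at scale $10s$. Since $p_0<2$, Jensen gives $M_t(y)^2\le\fint_{B_t(y)}|\nabla u_\e|^2$, and Fubini on $B_s(x)$ (using $|B_s(x)\cap B_t(z)|\le|B_t|$ and $s+t\le 2s$, thanks to $t<s$) produces
\begin{equation*}
\fint_{B_s(x)}M_t[\nabla u_\e]^2 \;\le\; 2^d\fint_{B_{2s}(x)}|\nabla u_\e|^2.
\end{equation*}
Since $2s>t>\e^*$, Lemma \ref{lem.RHolder} at radius $2s$ centered at $x$ gives $\big(\fint_{B_{2s}(x)}|\nabla u_\e|^2\big)^{1/2}\le C\big(\fint_{B_{8s}(x)}|\nabla u_\e|^{p_0}\big)^{1/p_0}$. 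A symmetric Fubini computation (now exploiting that $t<s$ forces $B_t(z)\subset B_{10s}(x)$ for every $z\in B_{8s}(x)$, so $|B_{10s}(x)\cap B_t(z)|=|B_t|$ there) yields $\fint_{B_{8s}(x)}|\nabla u_\e|^{p_0}\le(5/4)^d\fint_{B_{10s}(x)}M_t[\nabla u_\e]^{p_0}$. Chaining these three bounds and taking square roots completes (ii).

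The main technical point is the volume estimate for the spherical cap underlying (i); the rest is bookkeeping via Fubini together with a single application of Lemma \ref{lem.RHolder}. No homogenization-specific input beyond that lemma enters the proof, so the argument is uniform for all $t\in(\e^*,1)$.
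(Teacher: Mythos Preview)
Your argument is correct and follows essentially the same route as the paper: Fubini for both parts, together with the single use of Lemma~\ref{lem.RHolder} at radius $2s$ in part~(ii) and the final Fubini step converting $\fint_{B_{8s}}|\nabla u_\e|^{p_0}$ back to $\fint_{B_{10s}}M_t^{p_0}$. The only difference is cosmetic: you justify the geometric lower bound $|B_s(x)\cap B_t(z)|\ge c|B_s|$ explicitly via the spherical cap argument (and split $s\le t/2$ versus $s\in(t/2,t)$, which is in fact unnecessary since the threshold $(s^2+d^2-t^2)/(2d)<s^2/(2t)<s/2$ holds for all $s\in(0,t)$), whereas the paper simply asserts the bound.
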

\begin{proof}
	(i) 
	This part has nothing to do with equations and may be proved for general $F$ instead of $\nabla u_\e $. By the definition and the Fubini's theorem
	\begin{equation*}
	\begin{aligned}
	\fint_{B_s(x)} |M_t[F](y)|^{p_0} dy & =  \fint_{B_s(x)} \fint_{B_t(y)} |F(z)|^{p_0} dz dy\\
	& = \fint_{B_s(x)} |B_t(x)|^{-1} \int_{B_{t+s}(x)} |F(z)|^{p_0} \chi_{\{z: |z-y|\le t \}} dz dy \\
	& = |B_t(x)|^{-1} \int_{B_{t+s}(x)} |F(z)|^{p_0} \fint_{B_s(x)}\chi_{\{y: |z-y|\le t \}} dy dz,
	\end{aligned}
	\end{equation*}
	where $\chi_E$ is the indicator function of the set $E$.
	Now, if $z\in B_t(x)$ and $s\in (0,t)$, then
	\begin{equation*}
	\fint_{B_s(x)}\chi_{\{y: |z-y|\le t \}} dy = \frac{|B_s(x) \cap B_t(z)|}{|B_s(x)|} \ge c > 0,
	\end{equation*}
	where $c$ is an absolute constant. This implies
	\begin{equation*}
	\fint_{B_s(x)} |M_t[F](y)|^{p_0} dy \ge c \fint_{B_t(x)} |F(y)|^{p_0} dy = c\Big( M_t[F](x) \Big)^{p_0}.
	\end{equation*}
	
	(ii) The second part is proved by Lemma \ref{lem.RHolder}. Using the H\"{o}lder inequality and the Fubini's theorem, we have
	\begin{equation*}
	\begin{aligned}	
	\bigg( \fint_{B_s(x)} |M_t[\nabla u_\e](y)|^2 dy \bigg)^{1/2} & = \bigg( \fint_{B_s(x)} \bigg( \fint_{B_t(y)} |\nabla u_\e(z)|^{p_0} dz\bigg)^{2/{p_0}} dy \bigg)^{1/2} \\
	& \le \bigg( \fint_{B_s(x)} \fint_{B_t(y)} |\nabla u_\e(z)|^2 dz dy \bigg)^{1/2} \\
	& = \bigg( \frac{1}{|B_t(x)| |B_s(x)|} \int_{B_{t+s}(x)} \int_{B_s(x)} |\nabla u_\e(z)|^2 \chi_{\{ y:|y-z| \le t \}} dy dz \bigg).
	\end{aligned}
	\end{equation*}
	Now, observe that
	\begin{equation*}
	\int_{B_s(x)} \chi_{\{ y:|y-z| \le t \}} dy \le |B_{t}(x)|.
	\end{equation*}
	It follows from the above estimates and our assumption $s>t$ that
	\begin{equation}\label{est.Mt2Du}
	\bigg( \fint_{B_s(x)} |M_t[\nabla u_\e](y)|^2 dy \bigg)^{1/2} \le C_d \bigg( \fint_{B_{2s}(x)} |\nabla u_\e|^2 \bigg)^{1/2} \le C \bigg( \fint_{B_{8s}(x)} |\nabla u_\e|^{p_0} \bigg)^{1/{p_0}}.
	\end{equation}
	It suffices to show
	\begin{equation}\label{est.Du.MtDu}
	\bigg( \fint_{B_{8s}(x)} |\nabla u_\e|^{p_0} \bigg)^{1/{p_0}} \le C\bigg( \fint_{B_{10s}(x)} |M_t[\nabla u_\e]|^{p_0} \bigg)^{1/{p_0}}. 
	\end{equation}
	The idea is the similar as part (i). The Fubini's theorem implies
	\begin{equation*}
	\begin{aligned}
	\fint_{B_{10s}(x)} |M_t[\nabla u_\e]|^{p_0} & = \fint_{B_{10s}(x)} \fint_{B_t(y)} |\nabla u_\e(z)|^{p_0} dz dy \\
	& = |B_t(x)|^{-1} \int_{B_{10s+t}(x)} |\nabla u_\e(z)|^{p_0} \fint_{B_{10s}(x)} \chi_{\{ y: |y-z|\le t \} } dy dz.
	\end{aligned}
	\end{equation*}
	Now if $z\in B_{8s}(x)$ and $t<s$
	\begin{equation*}
	\int_{B_{10s}(x)} \chi_{\{ y: |y-z|\le t \} } dy = |B_t(x)|.
	\end{equation*}
	This implies
	\begin{equation*}
	\fint_{B_{10s}(x)} |M_t[\nabla u_\e]|^{p_0} \ge c\fint_{B_{8s}(x)} |\nabla u_\e(z)|^{p_0} dz,
	\end{equation*}
	for some absolute constant $c>0$. The proof of (\ref{est.Du.MtDu}) then is complete.
\end{proof}

\begin{corollary}[Large-scale self-improvement]\label{coro.large-selfimp}
	There exists  $\delta>0$, depending only on $d,m$ and $\Lambda$, so that for any $t\in (\e^*,1)$ and $B_r(x)\subset B_1(0)$
	\begin{equation*}
	\bigg( \fint_{B_{r/2}(x)} |M_t[\nabla u_\e]|^{2+\delta} \bigg)^{1/{(2+\delta)}} \le C  \bigg( \fint_{B_{2r}(x)} |\nabla u_\e|^2 \bigg)^{1/2}.
	\end{equation*}
\end{corollary}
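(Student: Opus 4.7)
The plan is to view $F := M_t[\nabla u_\e]$ as a nonnegative function on $B_1(0)$ and apply Gehring's self-improving lemma to it. The single ingredient required beyond the basic Gehring machinery is a scale-uniform reverse Hölder inequality
$$\bigg( \fint_{B_s(y)} |F|^2 \bigg)^{1/2} \le C\bigg( \fint_{B_{10s}(y)} |F|^{p_0} \bigg)^{1/p_0},$$
valid for every $s>0$ and every ball with $B_{10s}(y) \subset B_1(0)$, with $C$ depending only on $d, m, \Lambda$.

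To establish this inequality I would split into the two scale regimes already isolated by Lemma \ref{lem.RHolder.Mt}. For $s > t$ the inequality is precisely part (ii). For $s \le t$, I would apply part (i) at an arbitrary point $z \in B_s(y)$: since $B_s(z) \subset B_{2s}(y)$, it yields the pointwise bound $F(z) \le C\bigl(\fint_{B_{2s}(y)} |F|^{p_0}\bigr)^{1/p_0}$. Taking the supremum over $z \in B_s(y)$ upgrades the left side to $L^\infty$, which is strictly stronger than the $L^2$ average appearing in the reverse Hölder inequality. Merging the two regimes delivers the displayed estimate with a single constant.

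Once the reverse Hölder inequality is available on all sufficiently small balls inside $B_1(0)$, Gehring's lemma produces $\delta > 0$, depending only on $d, m, \Lambda$ through the reverse Hölder constant, and an estimate
$$\bigg( \fint_{B_{r/2}(x)} |F|^{2+\delta} \bigg)^{1/(2+\delta)} \le C\bigg( \fint_{B_{r}(x)} |F|^{2} \bigg)^{1/2}.$$
To convert the right-hand side into an estimate in terms of $|\nabla u_\e|$, I would apply Jensen's inequality with exponent $2/p_0 > 1$, giving $|F(y)|^{2} \le \fint_{B_t(y)} |\nabla u_\e|^{2}$, and then swap the order of integration by Fubini. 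In the relevant regime $t \in (\e^*,1)$ with $t$ at most comparable to $r$, this bounds $\fint_{B_r(x)} |F|^2$ by a dimensional constant times $\fint_{B_{2r}(x)} |\nabla u_\e|^2$, closing the argument.

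The main technical point to watch is the merging of the two scale regimes $s \le t$ and $s > t$: the two bounds from Lemma \ref{lem.RHolder.Mt} arise from genuinely distinct mechanisms (the averaging smoothness of $M_t$ below scale $t$, versus the Caccioppoli/Sobolev reverse Hölder for $\nabla u_\e$ above scale $t$), and one must confirm that a single reverse Hölder inequality with constants depending only on $d, m, \Lambda$ holds across the entire range before invoking Gehring. Beyond this, the argument is essentially bookkeeping.
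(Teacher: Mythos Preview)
Your proposal is correct and follows essentially the same route as the paper: combine the two parts of Lemma~\ref{lem.RHolder.Mt} into a single reverse H\"older inequality for $M_t[\nabla u_\e]$ valid at all scales, invoke Gehring, and then pass from $M_t[\nabla u_\e]$ back to $\nabla u_\e$ via Jensen and Fubini (which is exactly the first inequality in \eqref{est.Mt2Du}). Your explicit handling of the $s\le t$ regime via part~(i) applied at each $z\in B_s(y)$ is a bit more detailed than the paper's terse citation, and your remark that the final Fubini step needs $t$ at most comparable to $r$ is a genuine observation that the paper leaves implicit (the corollary is only ever applied with $t\le r/2$).
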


\begin{proof}
	Lemma \ref{lem.RHolder.Mt} shows that the function $M_t[\nabla u]$ satisfies the reverse H\"{o}lder inequality
	\begin{equation*}
	\bigg( \fint_{B_s(x)} |M_t[\nabla u_\e]|^2 \bigg)^{1/2} \le C \bigg( \fint_{B_{10s}(x)} |M_t[\nabla u_\e]|^{p_0} \bigg)^{1/{p_0}},
	\end{equation*}
	for all $x\in B_{2r}(0)$ and $s>0$ with $B_{10s}(x) \subset B_{2r}(0)$, where ${p_0} = 2d/(d+2) < 2$. Then, the standard Gehring's inequality (see \cite[Theorem 6.38]{GM12}) implies that there exists $\delta>0$ (depending only on the constant $C$ in the last inequality) so that
	\begin{equation*}
	\bigg( \fint_{B_{r/2}(x)} |M_t[\nabla u_\e]|^{2+\delta} \bigg)^{1/{(2+\delta)}} \le  C\bigg( \fint_{B_{r}(x)} |M_t[\nabla u_\e]|^{2} \bigg)^{1/{2}}.
	\end{equation*}
	This and the first inequality in (\ref{est.Mt2Du}) lead to the desired estimate.
\end{proof}

\subsection{Approximation}

Suppose $D^\e$ is an $\e$-scale flat domain with a modulus $\zeta$ satisfying (\ref{cond.flatness}). Let $u_\e$ be the weak solution (\ref{eq.main}) in $D^\e_2$. Recall that $u_\e$ may be extended naturally to the entire $B_2$ by zero-extension. For short, we denote 
\begin{equation*}
T_r^{\e,+}:= B_r \cap \{ x\in \R^d: x\cdot n < r\zeta(r,\e/r) \}
\end{equation*}
and 
\begin{equation*}
T_r^{\e,-}:= B_r \cap \{ x\in \R^d: x\cdot n < -r\zeta(r,\e/r) \}.
\end{equation*}

We construct an approximate solution of $u_\e$. Let $w_\e = w_\e^r$ (we will drop the superscript $r$ for simplicity, if there is no ambiguity) be the weak solution of
\begin{equation}\label{eq.we.D}
\left\{ 
\begin{aligned}
\nabla\cdot (A^\e \nabla  w_\e) &= 0 \qquad \text{in } T_r^{\e,+},\\
w_\e &= u_\e \qquad \text{on } \partial T_r^{\e,+}.
\end{aligned}
\right.
\end{equation}
Similar as $u_\e$, we extend $w_\e$ across the boundary by zero-extension.

\begin{lemma}\label{lem.approx}
	For every $r\in (\e,1)$
	\begin{equation}
	\bigg( \fint_{B_r} |\nabla u_\e - \nabla w_\e|^2 \bigg)^{1/2} \le C\zeta(r,\e/r)^\gamma \bigg( \fint_{B_{4r}} |\nabla u_\e|^2 \bigg)^{1/2},
	\end{equation}
	where $\gamma = 1/2- 1/(2+\delta)$.
\end{lemma}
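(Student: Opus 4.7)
The plan is to exploit that $v:=u_\e-w_\e$ lies in $H^1_0(T^{\e,+}_r)$ (since $w_\e=u_\e$ on $\partial T^{\e,+}_r$), and that in the ``slim layer'' $T^{\e,+}_r\setminus D^\e_r$ one has $v=-w_\e$ because $u_\e\equiv 0$ there under zero-extension. The outer boundary $\partial B_r\cap D^\e_r$ is a subset of $\partial T^{\e,+}_r$, so $v=0$ on it; only the rough piece $\Delta^\e_r\subset B_r\cap\{|x\cdot n_r|\le r\zeta(r,\e/r)\}$ carries nontrivial trace of $v$ for the ``inner'' domain $D^\e_r$. Testing the equation for $w_\e$ against $v$ kills the interior contribution and leaves
$$\int_{T^{\e,+}_r}A^\e\nabla v\cdot\nabla v=\int_{T^{\e,+}_r}A^\e\nabla u_\e\cdot\nabla v=\int_{D^\e_r}A^\e\nabla u_\e\cdot\nabla v,$$
since $\nabla u_\e\equiv 0$ outside $D^\e_r$. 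The task becomes bounding this last integral.

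Next I would introduce a one-dimensional cutoff $\eta(x)=\eta_0(x\cdot n_r)$ with $\eta_0\equiv 1$ on $\{s\le-2r\zeta(r,\e/r)\}$, $\eta_0\equiv 0$ on $\{s\ge -r\zeta(r,\e/r)\}$, and $|\nabla\eta|\lesssim 1/(r\zeta(r,\e/r))$. Because $\eta$ vanishes on $\Delta^\e_r$ and $v$ vanishes on $\partial B_r\cap D^\e_r$, the product $\eta v$ lies in $H^1_0(D^\e_r)$, so the equation for $u_\e$ yields
$$\int_{D^\e_r}A^\e\nabla u_\e\cdot\nabla v=\int_{D^\e_r}A^\e\nabla u_\e\cdot\nabla\big((1-\eta)v\big),$$
with support contained in the strip $S:=B_r\cap\{-2r\zeta(r,\e/r)\le x\cdot n_r<r\zeta(r,\e/r)\}$ of thickness $\sim r\zeta(r,\e/r)$. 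The term $|v\,\nabla\eta|$ carries the dangerous factor $1/(r\zeta(r,\e/r))$, but this is killed by a one-dimensional Poincar\'e inequality in the normal direction: since $v\in H^1_0(T^{\e,+}_r)$ vanishes on the flat top $\{x\cdot n_r=r\zeta(r,\e/r)\}\cap B_r$, one has $\|v\|_{L^2(S)}\lesssim r\zeta(r,\e/r)\|\nabla v\|_{L^2(T^{\e,+}_r)}$. Cauchy--Schwarz plus ellipticity then yield, after absorbing a $\|\nabla v\|_{L^2(T^{\e,+}_r)}$ on the right,
$$\|\nabla v\|_{L^2(T^{\e,+}_r)}\lesssim\|\nabla u_\e\|_{L^2(D^\e_r\cap S)}.$$

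The last step is to estimate $\|\nabla u_\e\|_{L^2(D^\e_r\cap S)}$ using the large-scale Calder\'on--Zygmund bound. Since the assumed monotonicity of $r\mapsto r\zeta(r,\e/r)$ gives $r\zeta(r,\e/r)\ge\e\zeta(\e,1)=\e^*$ whenever $r\ge\e$, I can choose $t\in[\e^*,r\zeta(r,\e/r)]$ and cover $S$ by balls $\{B_t(x_i)\}$ with bounded overlap. The reverse H\"older inequality of Lemma \ref{lem.RHolder} gives $\|\nabla u_\e\|_{L^2(B_t(x_i))}^2\lesssim|B_t|\,M_{4t}[\nabla u_\e](x_i)^2$, and summing over $i$ produces $\|\nabla u_\e\|_{L^2(D^\e_r\cap S)}^2\lesssim\int_{S^*}|M_{4t}[\nabla u_\e]|^2$ on a slight enlargement $S^*$ with $|S^*|\lesssim r^d\zeta(r,\e/r)$. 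Applying H\"older with exponents $(2+\delta)/\delta$ and $(2+\delta)/2$, followed by Corollary \ref{coro.large-selfimp}, yields
$$\|\nabla u_\e\|_{L^2(D^\e_r\cap S)}^2\lesssim|S^*|^{\delta/(2+\delta)}|B_r|^{2/(2+\delta)}\fint_{B_{4r}}|\nabla u_\e|^2\lesssim r^d\,\zeta(r,\e/r)^{2\gamma}\fint_{B_{4r}}|\nabla u_\e|^2,$$
with $\gamma=\tfrac12-\tfrac1{2+\delta}$. Dividing by $|B_r|$ gives exactly the claimed bound.

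The main obstacle is the interplay between the rough boundary, which prevents direct $L^{2+\delta}$ control of $\nabla u_\e$, and the localization to a slab of thickness $r\zeta(r,\e/r)$ that can be as small as $\e^*$. The resolution is the averaging operator $M_t$ introduced in Section 2.1 together with the covering argument above: these replace the missing pointwise higher integrability of $\nabla u_\e$ by the integrability of $M_t[\nabla u_\e]$, while the choice $t\le r\zeta(r,\e/r)$ guarantees that the enlarged slab $S^*$ retains the same order of volume as $S$, so that no loss in $\zeta$ is incurred. Verifying $\eta v\in H^1_0(D^\e_r)$ is also a nontrivial bookkeeping point, and it hinges on the containment $\partial B_r\cap D^\e_r\subset\partial T^{\e,+}_r$, which is a direct consequence of Definition \ref{def.modulus}.
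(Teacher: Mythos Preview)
Your proof is correct and follows essentially the same route as the paper. The paper uses the cutoff $\phi=1-\eta$ (so $\phi\equiv 1$ on the slab and $\phi\equiv 0$ below), derives the identical energy identity, applies the same one-dimensional Poincar\'e inequality to control $\|v\nabla\phi\|_{L^2}$, and estimates $\|\nabla u_\e\|_{L^2(P_r)}$ via Fubini's theorem (instead of your covering argument) to reach $M_t[\nabla u_\e]$, then H\"older and Corollary~\ref{coro.large-selfimp}; the two arguments are interchangeable.
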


\begin{proof}
	First of all, note that $w_\e - u_\e\in H^1_0(T_r^{\e,+};\R^d)$. Then,
	by testing $w_\e - u_\e$ on the system (\ref{eq.we.D}), we have
	\begin{equation}\label{eq.we.var}
	\int_{T_r^{\e,+}} A^\e\nabla w_\e \cdot \nabla(w_\e - u_\e) = 0.
	\end{equation}
	Let $\phi\in C_0^\infty(\R^d)$ is a smooth function so that $\phi = 1$ on $T_r^{\e,+}\setminus D^\e_r$. Then $(1-\phi)(w_\e-u_\e)\in H^1_0(D^\e_r;\R^d)$. Thus, since $u_\e$ is a weak solution in $D^\e_r$,
	\begin{equation}\label{eq.ue.var}
	\int_{D^\e_r} A^\e\nabla u_\e \cdot \nabla \big((1-\phi)(w_\e - u_\e) \big) = 0.
	\end{equation}
	Combining (\ref{eq.we.var}) and (\ref{eq.ue.var}), we have
	\begin{equation}\label{eq.Du-w}
	\int_{T_r^{\e,+}} A^\e\nabla (w_\e - u_\e)\cdot \nabla (w_\e - u_\e) = -\int_{D^\e_r} A^\e \nabla u_\e\cdot \nabla(\phi (w_\e - u_\e)).
	\end{equation}
	
	Now, we choose $\phi$ properly. Observe that $T_r^{\e,+}\setminus D^\e_r \subset T_r^{\e,+}\setminus T_r^{\e,-}$. In view of the definitions of $T_r^{\e,+}$ and $T_r^{\e,-}$, then we may choose $\phi$ so that $\phi = 1$ on $T_r^{\e,+}\setminus T_r^{\e,-}$ and $\phi = 0$ in $T_r^{\e,+}\cap \{ x\in \R^d: n_r\cdot x < -2r\zeta(r,\e/r) \}$. Moreover, $|\nabla \phi| \le C(r\zeta(r,\e/r))^{-1}$.
	
	Denote the set $T_r^{\e,+} \cap \{x\in \R^d: x\cdot n_r > -2r\zeta(r,\e/r)  \}$ by $P_r$. Note that $P_r$ is a lamina-like region whose radius is $r$ and thickness is $3r\zeta(r,\e/r)$. Thus $|P_r| \le Cr^d \zeta(r,\e/r)$. By the ellipticity condition, we have
	\begin{equation}\label{est.Dwe-Due.Tr}
	\Lambda^{-1} \int_{T_r^{\e,+}} |\nabla w_\e - \nabla u_\e|^2 \le \Lambda \bigg( \int_{P_r }|\nabla u_\e|^2\bigg)^{1/2} \bigg( \int_{P_r }|\nabla (\phi(w_\e - u_\e))|^2\bigg)^{1/2}.
	\end{equation}
	On one hand, the Poincar\'{e} inequality implies
	\begin{equation}\label{est.Dphi.weue}
	\bigg( \int_{P_r }|\nabla (\phi(w_\e - u_\e))|^2\bigg)^{1/2} \le C\bigg( \int_{T_r^{\e,+} }|\nabla (w_\e - u_\e)|^2\bigg)^{1/2}.
	\end{equation}
	On the other hand, we estimate
	\begin{equation*}
	J: = \bigg( \frac{1}{|D^\e_r|} \int_{ P_r } |\nabla u_\e|^2 \bigg)^{1/2}.
	\end{equation*}
	Let $t = r\zeta(r,\e/r)\ge \e^*$. By the Fubini's theorem, Lemma \ref{est.reverse.Bt} and Corollary \ref{coro.large-selfimp}, we have
	\begin{equation*}
	\begin{aligned}
	J & \le C\bigg( \frac{1}{|D^\e_r|} \int_{P_r} \fint_{B_{t/4}(x)} |\nabla u_\e(y)|^2dy dx  \bigg)^{1/2} \\
	& \le C \bigg( \frac{1}{|D^\e_r|} \int_{P_r} \bigg( \fint_{B_t(x)} |\nabla u_\e(y)|^{p_0} dy \bigg)^{2/{p_0}} dx  \bigg)^{1/2} \\
	& = C \bigg( \frac{1}{|D^\e_r|} \int_{P_r} |M_t[\nabla u_\e](x)|^2 dx  \bigg)^{1/2} \\
	& \le C\bigg( \frac{|P_r|}{|D^\e_r|} \bigg)^{1/2-1/(2+\delta)} \bigg( \fint_{B_r} |M_t[\nabla u_\e](x)|^{2+\delta} dx  \bigg)^{1/(2+\delta)} \\
	& \le C\zeta(r,\e/r)^{\gamma} \bigg( \fint_{D^\e_{4r}} |\nabla u_\e|^2 \bigg)^{1/2},
	\end{aligned}
	\end{equation*}
	where Corollary \ref{coro.large-selfimp} is used in the last inequality. 
	
	Inserting this into (\ref{est.Dwe-Due.Tr}) and using (\ref{est.Dphi.weue}), we obtain the desired estimate.
\end{proof}

\subsection{Large-scale Calder\'{o}n-Zygmund estimate}
Recall the assumption (\ref{cond.flatness}) on $\zeta$:
\begin{equation*}
\lim_{t\to 0^+} \sup_{r,s\in (0,t)} \zeta(r,s) = 0.
\end{equation*}
Thus, given any $\rho>0$, there exists $\e_0 >0$, depending only on the modulus $\rho, \zeta,\sigma,\Lambda,m$ and $d$, so that for any $r\in (\e/\e_0, \e_0)$ (so we need to assume $\e \le \e_0^2$),
\begin{equation}\label{est.zeta.rho}
\zeta(r,\e/r)^\gamma < \rho.
\end{equation}

Let $t>0$. Define the truncated maximal function in a ball $B$ by
\begin{equation}\label{key}
\mathcal{M}_{t,B}[F](x) = \sup \bigg\{ \bigg( \fint_{B_r(x)} |F|^2 \bigg)^{1/2}: r> t, B_r(x) \subset B \bigg\}.
\end{equation}
The following large-scale real-variable argument will be useful to us.
\begin{theorem}[A large-scale real-variable argument]\label{thm.real}
	Let $B_0$ be a ball in $\R^d$ and $F\in L^2(\alpha B_0)$. Let $q>2$ and $t>0$. Suppose that for each ball $B \subset 2B_0$ with $|B|\le c_0|B_0|$ and radius no less than $t$, there exist two measurable functions $F_B$ and $R_B$ on $2B$ such that $|F|\le |F_B| + |R_B|$ on $2B$, and
	\begin{equation}\label{est.RealCond}
	\begin{aligned}
	\bigg( \fint_{2B} |R_B|^q \bigg)^{1/q} & \le C_1 \bigg( \fint_{\alpha B} |F|^2 \bigg)^{1/2},\\
	\bigg( \fint_{2B} |F_B|^2 \bigg)^{1/2} &\le \eta \bigg( \fint_{\alpha B} |F|^2 \bigg)^{1/2},
	\end{aligned}
	\end{equation}
	where $C_1 >1,\alpha > 2$ and $0<c_0<1$. Then for any $2<p<q$ there exists $\eta_0>0$, depending only on $C_1,c_0,\alpha,p,q$, with the property that if $0\le \eta \le \eta_0$, then $F\in L^p(B_0)$ and
	\begin{equation}\label{est.Shen}
	\bigg( \fint_{B_0} |\mathcal{M}_{t,\alpha B_0}[F]|^p \bigg)^{1/p} \le C\bigg( \fint_{\alpha B_0} |F|^2 \bigg)^{1/2},
	\end{equation}
	where $C$ depends at most on $C_1,c_0,\alpha,p$ and $q$.
\end{theorem}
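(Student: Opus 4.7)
The plan is to follow Shen's real-variable argument \cite[Chapter 3]{S18}: a stopping-time / good-$\lambda$ inequality adapted to the truncated maximal function. The essential compatibility is that the decomposition hypothesis is valid only on balls of radius $\ge t$, while $\mathcal{M}_{t,\alpha B_0}$ samples precisely such balls, so the two scales are aligned.

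First I would rescale and translate to reduce to the case $B_0=B_1(0)$ and $\fint_{\alpha B_0}|F|^2=1$, and abbreviate $\mathcal{M}F:=\mathcal{M}_{t,\alpha B_0}[F]$. By Cavalieri's principle, it suffices to prove a distributional bound of the form
\begin{equation*}
|\{x\in B_0:\mathcal{M}F(x)>\lambda\}|\le C\lambda^{-p} \qquad \text{for every } \lambda\ge\lambda_0,
\end{equation*}
where $\lambda_0$ is a large absolute constant depending only on $d,\alpha,c_0$ and the normalization.

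Next I would establish a good-$\lambda$ inequality. For $\lambda\ge\lambda_0$ and a large parameter $A>1$ to be chosen, set $E_\lambda:=\{\mathcal{M}F>\lambda\}\cap B_0$. Each $x\in E_{A\lambda}$ witnesses some ball $B\ni x$ with $B\subset\alpha B_0$, radius $r_B\ge t$ and $\fint_B |F|^2>(A\lambda)^2$. Applying a Vitali selection to these balls yields a disjoint subfamily $\{B_k\}$ whose fivefold dilates cover $E_{A\lambda}$; by taking $\lambda_0$ large one guarantees $|B_k|\le c_0|B_0|$ and $2B_k\subset 2B_0$, and that on a slight enlargement of each $B_k$ the $L^2$-average of $F$ is bounded by $\lambda^2$ (the usual stopping-time condition outside $E_\lambda$). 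The hypothesis then produces the splitting $F=F_{B_k}+R_{B_k}$ on $2B_k$. Chebyshev together with the $L^q$-bound on $R_{B_k}$ gives $|\{|R_{B_k}|>cA\lambda\}\cap 2B_k|\le C_1^q(cA)^{-q}|B_k|$, while the $L^2$-bound on $F_{B_k}$ gives $|\{|F_{B_k}|>cA\lambda\}\cap 2B_k|\le\eta^2(cA)^{-2}|B_k|$. Summing in $k$ and using $\sum_k|B_k|\le C|E_\lambda|$ delivers
\begin{equation*}
|E_{A\lambda}|\le C\bigl(A^{-q}+\eta^2 A^{-2}\bigr)|E_\lambda|.
\end{equation*}

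Finally I would choose $A$ large so that $CA^{p-q}\le\tfrac14$ (possible because $p<q$) and then $\eta_0$ small so that $C\eta_0^{\,2}A^{p-2}\le\tfrac14$; iterating the inequality along $\lambda_0,A\lambda_0,A^2\lambda_0,\ldots$ and integrating through Cavalieri's principle yields (\ref{est.Shen}). The main technical obstacle I expect is the bookkeeping at the truncation scale $r_B\simeq t$: the Vitali balls must simultaneously satisfy $2B_k\subset 2B_0$, $|B_k|\le c_0|B_0|$, and $r_{B_k}\ge t$, which forces $\lambda_0$ to be chosen large enough that the upper-scale constraint $|B_k|\le c_0|B_0|$ is automatic, while the lower bound $r_{B_k}\ge t$ required by the hypothesis is inherited from the definition of the truncated maximal function itself.
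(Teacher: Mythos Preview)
The paper does not supply a proof of this theorem: immediately after the statement it records that the result is a simplified version of \cite[Theorem~4.1 and Remark~4.2]{Shen20}, which in turn is a corollary of the full-scale real-variable argument \cite[Theorem~4.2.6]{S18}. Your sketch attempts to carry out the good-$\lambda$ argument directly, which is exactly how those cited references proceed, so in spirit you and the paper point at the same underlying proof.

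That said, one step in your outline does not work as written. After the Vitali selection you apply Chebyshev directly to $|R_{B_k}|$ and $|F_{B_k}|$ and claim this controls $|E_{A\lambda}|$. But $E_{A\lambda}$ is the super-level set of the \emph{truncated maximal function}, not of $|F|$; a point $y\in E_{A\lambda}\cap 5B_k$ may satisfy $\mathcal{M}F(y)>A\lambda$ while $|F(y)|$, $|F_{B_k}(y)|$ and $|R_{B_k}(y)|$ are all small, and in any case the covering is by $5B_k$, not $2B_k$, so the splitting on $2B_k$ does not even see all of $E_{A\lambda}\cap 5B_k$. The standard repair is a localization step: using the stopping-time control $\bigl(\fint_{\alpha B_k}|F|^2\bigr)^{1/2}\le C\lambda$ --- which itself requires selecting the $B_k$ by a genuine stopping-time at level $\lambda$, not merely as witness balls at level $A\lambda$ as you describe --- one shows that every witness ball for a point $y\in E_{A\lambda}\cap 5B_k$ has radius at most a fixed multiple of $r_{B_k}$ and hence sits inside a fixed dilate $CB_k$. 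One then bounds $|E_{A\lambda}\cap 5B_k|$ by the super-level set of a local Hardy--Littlewood maximal function of $F$ restricted to that dilate, applies the splitting hypothesis on $CB_k$, and finishes with the weak-type maximal inequality together with your Chebyshev estimates. This localization is routine in the references the paper cites, but it is the genuine missing link in your sketch.
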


The above theorem is a simplified version of the result stated in \cite[Theorem 4.1 and Remark 4.2]{Shen20}, which is actually a corollary of the standard full-scale real-variable argument (e.g., \cite[Theorem 4.2.6]{S18}).

The following is the main result of this section which may be viewed as a large-scale Calder\'{o}n-Zygmund estimate.
\begin{theorem}\label{thm.LSSZ}
	Given any $p\in (2,\infty)$, there exists a constant $\e_0>0$, depending on $\zeta,p,\Lambda,m$ and $d$, so that for any $t\in (\e/\e_0,\e_0)$ and $r\in (0,\e_0)$, we have
	\begin{equation*}
	\bigg( \fint_{B_r} |M_t[\nabla u_\e]|^p \bigg)^{1/p} \le C\bigg( \fint_{B_{20r}} |M_t[\nabla u_\e]|^2 \bigg)^{1/2}.
	\end{equation*}
\end{theorem}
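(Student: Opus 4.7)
My approach is to apply the large-scale real-variable argument (Theorem \ref{thm.real}) with $F := M_t[\nabla u_\e]$, $B_0 = B_r$ and the outer ball $\alpha B_0 = B_{20r}$, for some $q > p$. For each admissible ball $B = B_s(x_0) \subset 2B_0$ with $s \ge t$, I build the decomposition $|F| \le |F_B| + |R_B|$ by comparing $u_\e$ with the approximate solution $w_\e$ constructed in Section 2.2. If $\operatorname{dist}(x_0,\partial D^\e)$ exceeds a fixed multiple of $s$, $B$ is interior and the standard interior Avellaneda-Lin estimate handles it trivially. Otherwise I pick $y \in \partial D^\e$ with $|y-x_0| \le 2s$, set $r_B := 4s$ so that $2B \subset B_{r_B}(y)$, and let $w_\e$ solve (\ref{eq.we.D}) on the flat-bottomed half-ball $T^{\e,+}_{r_B}(y)$ with boundary data $u_\e$. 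From the elementary $p_0$-triangle inequality $M_t[\nabla u_\e] \le C(M_t[\nabla w_\e] + M_t[\nabla u_\e - \nabla w_\e])$ I set $R_B := M_t[\nabla w_\e]$ and $F_B := M_t[\nabla u_\e - \nabla w_\e]$, where $\nabla w_\e$ is extended by zero outside $T^{\e,+}_{r_B}(y)$.

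\textbf{Good part.} To bound $R_B$ in $L^q$: the domain $T^{\e,+}_{r_B}(y)$ has a flat hyperplane bottom and a smooth spherical top, hence is $C^{1,\alpha}$ away from the corner where the two meet. For $x \in 2B \subset \tfrac{1}{2}B_{r_B}(y)$, the ball $B_t(x)$ keeps a positive distance from that corner (since $t \le s \le r_B/4$), so the Avellaneda-Lin boundary Lipschitz estimate valid at scales $\ge \e$ gives
\begin{equation*}
M_t[\nabla w_\e](x) \le \bigg(\fint_{B_t(x)}|\nabla w_\e|^2\bigg)^{1/2} \le C\bigg(\fint_{T^{\e,+}_{r_B}(y)}|\nabla w_\e|^2\bigg)^{1/2}.
\end{equation*}
The energy inequality $\norm{\nabla w_\e}_{L^2(T^{\e,+}_{r_B}(y))} \le C\norm{\nabla u_\e}_{L^2(T^{\e,+}_{r_B}(y))}$ (since $w_\e - u_\e \in H^1_0(T^{\e,+}_{r_B}(y))$), combined with the intermediate chain in the proof of Lemma \ref{lem.RHolder.Mt}(ii) that trades $\nabla u_\e$-averages for $M_t$-averages over a slightly enlarged ball, yields $\norm{R_B}_{L^\infty(2B)} \le C(\fint_{\alpha B}|F|^2)^{1/2}$, so the $L^q$ condition of Theorem \ref{thm.real} holds for any $q<\infty$.

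\textbf{Bad part.} For $F_B$, Lemma \ref{lem.approx} gives
\begin{equation*}
\norm{\nabla u_\e - \nabla w_\e}_{L^2(B_{r_B}(y))} \le C\zeta(r_B,\e/r_B)^\gamma\norm{\nabla u_\e}_{L^2(B_{4r_B}(y))},
\end{equation*}
and a Fubini computation in the spirit of Lemma \ref{lem.RHolder.Mt}(ii) transfers this to $\norm{F_B}_{L^2(2B)} \le C\zeta(r_B,\e/r_B)^\gamma \norm{F}_{L^2(\alpha B)}$. Because $r_B \in (t, C\e_0)$ and $t \ge \e/\e_0$, the flatness assumption (\ref{cond.flatness}) lets me choose $\e_0$ small enough that $\zeta(r_B,\e/r_B)^\gamma \le \eta_0$, the threshold demanded by Theorem \ref{thm.real} for the chosen $p,q$. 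Theorem \ref{thm.real} then produces $(\fint_{B_0}|\mathcal{M}_{t,\alpha B_0}[F]|^p)^{1/p} \le C(\fint_{\alpha B_0}|F|^2)^{1/2}$, and the sub-averaging consequence of Lemma \ref{lem.RHolder.Mt}(i), applied with a radius slightly exceeding $t$, yields the pointwise comparison $M_t[\nabla u_\e](x) \le C\mathcal{M}_{t,\alpha B_0}[M_t[\nabla u_\e]](x)$, which converts the $\mathcal{M}_t$-bound into the desired $M_t$-bound.

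\textbf{Main obstacle.} The chief technical care I anticipate is the scale bookkeeping so that $2B \subset B_{r_B}(y)$ and $B_{4r_B}(y) \subset \alpha B$ hold simultaneously with $\alpha = 20$ as stated (a harmless rescaling or Vitali-type covering may be needed to match the stated constant), together with the verification that Avellaneda-Lin's boundary Lipschitz estimate genuinely applies on $T^{\e,+}_{r_B}(y)$ despite its corner, which I handle by localizing to the smooth flat portion of $\partial T^{\e,+}_{r_B}(y)$ where the relevant balls $B_t(x)$ lie.
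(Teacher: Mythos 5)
Your proposal is correct and follows essentially the same route as the paper: apply the large-scale real-variable argument (Theorem \ref{thm.real}) with the decomposition coming from the approximate solution $w_\e$ of (\ref{eq.we.D}), bound the good part via the boundary Lipschitz estimate on the flattened half-ball and the bad part via Lemma \ref{lem.approx} together with the flatness condition (\ref{est.zeta.rho}), and convert between $|\nabla u_\e|$, $M_t[\nabla u_\e]$ and the truncated maximal function using the Fubini estimates of Lemma \ref{lem.RHolder.Mt}. The only real difference is cosmetic: you feed $F=M_t[\nabla u_\e]$ into Theorem \ref{thm.real} from the start, whereas the paper takes $F=|\nabla u_\e|$ with $q=\infty$ and performs the same conversions afterward; your version has the mild advantage that the $L^\infty$ bound on $R_B=M_t[\nabla w_\e]$ genuinely needs only the large-scale Lipschitz estimate for $w_\e$ (no smoothness of $A$).
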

\begin{proof}
	This is a corollary of Theorem \ref{thm.real}. To see this, we need to approximate $u_\e$ at all scales no less than $t$. Let $\e_0 \ge r\ge t$ and $w_\e$ be the weak solution of (\ref{eq.we.D}). Since $t\in (\e/\e_0,\e_0)$, Lemma \ref{lem.approx} and (\ref{est.zeta.rho}) imply
	\begin{equation*}
	\bigg( \fint_{B_r} |\nabla u_\e - \nabla w_\e|^2 \bigg)^{1/2} \le C \rho \bigg( \fint_{B_{4r}} |\nabla u_\e|^2 \bigg)^{1/2},
	\end{equation*}
	where $\rho$ is arbitrary and $\e_0$ depends on the modulus $ \zeta, \rho, \sigma,\Lambda,m$ and $d$. Also note that the constant $C$ is independent of $\e_0$ or $\rho$. On the other hand, the boundary regularity of $w_\e$ and the energy estimate lead to
	\begin{equation*}
	\| \nabla w_\e \|_{L^\infty(B_{r/2})} \le C\bigg( \fint_{B_r} |\nabla u_\e|^2 \bigg)^{1/2}.
	\end{equation*}
	Even though the above two estimates were proved in $B_r = B_r(0)$ centered at the origin, it is obvious that they can be proved for any balls center at $y\in \Delta^\e_1$ or at $y\in D^\e_1$ with $B_{20r} \subset D^\e_2$ (interior estimate). By the zero-extension, these estimates actually hold in any balls with $B_{20r} \subset B_2$. We now choose $\rho$ sufficiently small (so that $\e_0$ is small and fixed) and apply Theorem \ref{thm.real} with $q= \infty, \alpha = 8, F = |\nabla u_\e|, F_{B} = |\nabla u_\e - \nabla w_r|$ and $R_B = |\nabla w_r|$. It follows that
	\begin{equation}\label{est.1126}
	\bigg( \fint_{B_r} |\mathcal{M}_{t,B_{8r}}[\nabla u_\e]|^p \bigg)^{1/p} \le C \bigg( \fint_{B_{8r}} |\nabla u_\e|^2 \bigg)^{1/2}.
	\end{equation}
	Observe that the truncated maximal function $\mathcal{M}_{t,B_{8r}}[\nabla u_\e]$ may be replaced by the averaging operator $M_t[\nabla u_\e]$ since the former is larger. Meanwhile, (\ref{est.Mt2Du}) and (\ref{est.Du.MtDu}) show that $|\nabla u_\e|$ on the right-hand side of (\ref{est.1126}) can also be replaced by $M_t[\nabla u_\e]$ by enlarging the ball. This proves the desired estimate.
\end{proof}	


\subsection{Improved approximation}
In the following theorem, we use Theorem \ref{thm.LSSZ} to improve the estimate in Lemma \ref{lem.approx}. Precisely, we improve the small exponent $\gamma = 1/2-1/(2+\delta)$ to any $\sigma\in (0,1/2)$.
\begin{theorem}\label{thm.improved.approx}
	For any $\sigma \in (0,1/2)$, there exist $\e_0>0$ and $C>0$, depending only on $\sigma, \zeta,p,\Lambda,m$ and $d$, so that for any $r\in (\e/\e_0,\e_0)$,
	\begin{equation}
	\bigg( \fint_{B_r} |\nabla u_\e - \nabla w_\e|^2 \bigg)^{1/2} \le C\big\{ \e/r + \zeta(r,\e/r) \big\}^\sigma \bigg( \fint_{B_{20r}} |\nabla u_\e|^2 \bigg)^{1/2},
	\end{equation}
	where $w_\e = w_\e^r$ is the weak solution of (\ref{eq.we.D}).
\end{theorem}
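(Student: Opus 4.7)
The plan is to repeat the proof of Lemma \ref{lem.approx} almost verbatim, except that at the step where Corollary \ref{coro.large-selfimp} is used to bound an integral of $M_t[\nabla u_\e]$ by an $L^2$ norm of $\nabla u_\e$, we instead invoke the much stronger large-scale Calder\'{o}n-Zygmund estimate of Theorem \ref{thm.LSSZ}. This substitution promotes the exponent from $\gamma = 1/2 - 1/(2+\delta)$ to $1/2 - 1/p$ for arbitrarily large $p$, and hence to any $\sigma \in (0,1/2)$.

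More concretely, repeating the steps leading to (\ref{est.Dwe-Due.Tr}) and (\ref{est.Dphi.weue}), the matter reduces to bounding
\begin{equation*}
J := \bigg(\frac{1}{|D^\e_r|} \int_{P_r} |\nabla u_\e|^2\bigg)^{1/2},
\end{equation*}
where $P_r$ is the lamina of thickness at most $3r\zeta(r,\e/r)$ introduced in Lemma \ref{lem.approx}. Since Theorem \ref{thm.LSSZ} requires the averaging scale to lie in $(\e/\e_0,\e_0)$, we set
\begin{equation*}
t := \max\bigl\{ r\zeta(r,\e/r),\ \e/\e_0 \bigr\};
\end{equation*}
after shrinking $\e_0$ in a way that depends only on $\zeta,\sigma,\Lambda,m$, and $d$, both $t \in (\e/\e_0,\e_0)$ and $r \in (\e/\e_0,\e_0)$ hold. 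A Vitali covering of $P_r$ by balls of radius $t$, combined with the reverse H\"{o}lder inequality (Lemma \ref{lem.RHolder}) and Lemma \ref{lem.RHolder.Mt}(i), yields
\begin{equation*}
\int_{P_r} |\nabla u_\e|^2 \le C \int_{\widetilde P_r} |M_t[\nabla u_\e]|^2,
\end{equation*}
where $\widetilde P_r$ denotes a thickening of $P_r$ that still lies in a slab of thickness of order $t$, so that $|\widetilde P_r|/|D^\e_r| \le C(t/r) \le C\bigl(\zeta(r,\e/r) + \e/r\bigr)$.

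For the given $\sigma \in (0,1/2)$, fix $p = p(\sigma) > 2$ such that $1/2 - 1/p \ge \sigma$. H\"{o}lder's inequality, Theorem \ref{thm.LSSZ}, and the first inequality in (\ref{est.Mt2Du}) then combine to give
\begin{equation*}
\bigg(\frac{1}{|D^\e_r|}\int_{\widetilde P_r} |M_t[\nabla u_\e]|^2\bigg)^{1/2} \le C\bigg(\frac{|\widetilde P_r|}{|D^\e_r|}\bigg)^{1/2 - 1/p} \bigg(\fint_{B_{20r}} |\nabla u_\e|^2\bigg)^{1/2},
\end{equation*}
so that $J \le C\bigl(\zeta(r,\e/r) + \e/r\bigr)^{\sigma} \bigl(\fint_{B_{20r}} |\nabla u_\e|^2\bigr)^{1/2}$. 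Substituting this bound back into (\ref{est.Dwe-Due.Tr}) and using (\ref{est.Dphi.weue}) as in Lemma \ref{lem.approx} concludes the proof.

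The only real obstacle is already dispatched by the large-scale Calder\'{o}n-Zygmund machinery of this section: a pointwise $L^p$ bound on $\nabla u_\e$ is hopeless because the boundary may be arbitrarily rough below scale $\e$, but the averaging operator $M_t$ at the truncation scale $t \ge \e/\e_0$ is precisely what absorbs this roughness. The term $\e/r$ in the final bound (absent from Lemma \ref{lem.approx}) is a genuinely new feature arising from this truncation: when $r\zeta(r,\e/r)$ is smaller than $\e/\e_0$, the scale $t$ is forced to be $\e/\e_0$, and this lower bound enters the volume ratio as $t/r$, which is of order $\e/r$.
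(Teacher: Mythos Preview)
Your proposal is correct and follows essentially the same route as the paper. The paper sets $t=\max\{\e/\e_0,\,r\zeta(r,\e/r)\}$, replaces the lamina $P_r$ by the slightly thicker $P_r^*=T_r^{\e,+}\cap\{x\cdot n_r>-2t\}$ (your $\widetilde P_r$), and then uses Fubini together with Lemma~\ref{lem.RHolder} to pass from $|\nabla u_\e|^2$ to $|M_t[\nabla u_\e]|^2$ on $P_r^*$; your Vitali-covering description accomplishes the same thing. The remaining H\"older step with $p$ chosen so that $1/2-1/p=\sigma$ and the invocation of Theorem~\ref{thm.LSSZ} are identical, and your explanation of why the extra term $\e/r$ appears is exactly the point.
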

\begin{proof}
	Given $\sigma\in (0,1/2)$, let $p$ be given by $\sigma = 1/2-1/p$. With this $p$, let $\e_0$ be the constant given in Theorem \ref{thm.LSSZ}. As in the proof of Lemma \ref{lem.approx},
	it suffices to improve the estimate of $J^*$ in the proof of Lemma \ref{lem.approx}, where
	\begin{equation*}
	J^*: = \bigg( \frac{1}{|D^\e_r|} \int_{ P_r^* } |\nabla u_\e|^2 \bigg)^{1/2},
	\end{equation*}
	where $P_r^* = T_r^{\e,+} \cap \{x\in \R^d:x\cdot n_r > -2t  \}$ and $t=\max\{ \e/\e_0,r\zeta(r,\e/r)\} \in (\e/\e_0,\e_0)$. Note that $t\le r$ for $r\in (\e/\e_0,\e_0)$. Now, the Fubini's theorem and Lemma \ref{est.reverse.Bt} imply
	\begin{equation*}
	\begin{aligned}
	\int_{ P_r^* } |\nabla u_\e|^2 &\le C \int_{P_r^*} \fint_{B_{t/4}(x)} |\nabla u_\e(y)|^2dy dx \\
	& \le  C\int_{P_r^*} \bigg( \fint_{B_{t}(x)} |\nabla u_\e(y)|^{p_0} dy \bigg)^{2/{p_0}} dx \\
	& \le C\int_{P_r^*} |M_{t}[\nabla u_\e](x)|^2 dx.
	\end{aligned}
	\end{equation*}
	It follows that
	\begin{equation*}
	\begin{aligned}
	J^*
	& \le C \bigg( \frac{|P_r^*|}{|D^\e_r|} \bigg)^{1/2-1/p} \bigg( \fint_{B_r} |M_{t}[\nabla u_\e](x)|^{p} dx  \bigg)^{1/p} \\
	& \le C(t/r)^{\sigma} \bigg( \fint_{D^\e_{20r}} |\nabla u_\e|^2 \bigg)^{1/2}\\
	& \le C\big\{ \e/r + \zeta(r,\e/r) \big\}^\sigma \bigg( \fint_{B_{20r}} |\nabla u_\e|^2 \bigg)^{1/2}, 
	\end{aligned}
	\end{equation*}
	where we have used Theorem \ref{thm.LSSZ} and the fact $|P_r^*| \simeq tr^{d-1}$.
	This implies the desired result by an argument as in the proof of Lemma \ref{lem.approx}.
\end{proof}

\section{Lipschitz Estimate}
\subsection{Boundary geometry}
Let $D^\e$ be a bounded $\e$-scale flat domain with $0\in \partial D^\e$. As usual, define $D^\e_t = D^\e\cap B_t(0)$ and $\Delta^\e_t = \partial D^\e\cap B_t(0)$. By Definition \ref{def.esmooth}, for any $t\in (\e,1)$, there exists a unit ``outer normal'' vector $n_t\in \mathbb{S}^{d-1}$ such that
\begin{equation*}
\begin{aligned}
T_t^{\e,-} &:= \{x\in \R^d: x\cdot n_t < - t \zeta(t,\e/t) \} \cap B_t(0)\\
& \subset D^\e_t \subset T_t^{\e,+}:= \{x\in \R^d: x\cdot n_t < t\zeta(t,\e/t) \}\cap B_t(0),
\end{aligned}
\end{equation*}
where $\zeta(t,s)$ is a $\sigma$-admissible  modulus. This particularly implies that both $T_t^{\e,-}$ and $T_t^{\e,+}$ approximate $D^\e_t$ well at almost all scales with $\e\ll t \ll 1$. Moreover, 
\begin{equation*}
|T_t^{\e,+}\setminus T_t^{\e,-}| \le C_d t^{d} \zeta(t,\e/t) = C_d \zeta(t,\e/t) |D^\e_t|.
\end{equation*}

The outer normal $n_t$ of the flat boundary of $T_t^{\pm}$ will play an important role in our proof. Intuitively, it represents a  macroscopically approximate direction perpendicular to the boundary near $0$ at $t$-scale and coincides with the usual outer normal if the boundary is smooth (say, $C^{1,\alpha}$). The following lemma shows that $n_t$ changes gently with $t\in (\e,1)$.

\begin{lemma}\label{lem.n_r}
	Let $\e\le s\le r\le 1$, then
	\begin{equation*}
	|n_r - n_s| \le C\frac{r\zeta(r,\e/r)}{s}.
	\end{equation*}
\end{lemma}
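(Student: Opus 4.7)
The plan is to translate the inclusions of Definition~\ref{def.modulus} into a pair of linear inequalities among the components of $n_r$ decomposed along $n_s$, and then close them off using the unit-sphere constraint. Since $s\le r$ one has $D^\e_s=D^\e_r\cap B_s$, so restricting the inclusions at both scales to $B_s$ produces the two purely geometric cross-inclusions
\[
T_s^{\e,-}\subset T_r^{\e,+}\cap B_s,\qquad T_r^{\e,-}\cap B_s\subset T_s^{\e,+}.
\]
These are statements about half-spaces intersected with a ball; the rough boundary $\partial D^\e$ has been eliminated from the problem.

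Write $n_r=\alpha n_s+\beta w$ with $w\in n_s^\perp$, $|w|=1$, $\beta\ge 0$ (chosen by orientation), so that $\alpha^2+\beta^2=1$ and $|n_r-n_s|^2=2(1-\alpha)$. The key step is to supremize $x\cdot n_r$ over the $(d-1)$-disk $\{x\in\overline{B_s}:x\cdot n_s=-s\zeta_s\}$, which sits inside $\overline{T_s^{\e,-}}$; writing $\zeta_t:=\zeta(t,\e/t)$, the first cross-inclusion immediately forces
\[
-\alpha\,s\zeta_s+\beta s\sqrt{1-\zeta_s^2}\le r\zeta_r,
\]
and the second cross-inclusion gives the mirror bound with $(s,\zeta_s)$ and $(r,\zeta_r)$ interchanged. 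After the cosmetic reduction to the regime $r\zeta_r\le s/2$ (outside which $|n_r-n_s|\le 2\le 4r\zeta_r/s$ is trivial), adding the two inequalities and invoking the monotonicity $s\zeta_s\le r\zeta_r$ recorded just before Lemma~\ref{lem.RHolder}, together with $\zeta_s,\zeta_r\le 1/2$, extracts the clean bound $\beta\le Cr\zeta_r/s$.

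The main obstacle is ruling out the antipodal configuration $\alpha<0$: the endpoint inequalities above do not, by themselves, prevent $n_r\approx -n_s$. I plan to close this gap by revisiting the first cross-inclusion on the full set $\{x\cdot n_s\le -s\zeta_s\}\cap\overline{B_s}$: for $\alpha<0$ with $|\alpha|\ge\zeta_s$, the supremum of $x\cdot n_r$ over this set is attained at an interior critical point and equals $s$ exactly, so the inclusion forces $s\le r\zeta_r$, contradicting $r\zeta_r\le s/2$; the complementary case $|\alpha|<\zeta_s\le 1/2$ forces $\beta\ge\sqrt{3}/2$, which is incompatible with the bound $\beta\le Cr\zeta_r/s\le 1/\sqrt{3}$ already established. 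Hence $\alpha\ge 0$, whence $1-\alpha\le\beta^2$ and $|n_r-n_s|\le\sqrt{2}\,\beta\le Cr\zeta_r/s$, as desired. The entire argument uses only elementary linear algebra and the monotonicity of $r\mapsto r\zeta(r,\e/r)$; the equation and coefficients play no role.
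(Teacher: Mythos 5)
Your proof is correct and follows the same geometric idea the paper only sketches: the nested slab inclusions at scales $s\le r$ force the two normals together, with the monotonicity of $t\mapsto t\zeta(t,\e/t)$ and the bound $\zeta\le 1/2$ closing the estimate, and you additionally supply the rigor the paper omits, notably ruling out the antipodal configuration $n_r\approx -n_s$. One small adjustment: with the constant (roughly $4/\sqrt3$) that your two linear inequalities actually yield for $\beta$, the trivial-case threshold should be $r\zeta(r,\e/r)\le s/4$ rather than $s/2$, so that the established bound $\beta\le 1/\sqrt 3$ genuinely contradicts $\beta\ge\sqrt3/2$; this changes nothing of substance.
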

Let us sketch the proof of Lemma \ref{lem.n_r}. By the definition of $\e$-flat domain, the set $T_t^{\e,+}\setminus T_t^{\e,-}$ (a thin cylinder with height $2t\zeta(t,\e/t)$) contains the boundary $\Delta_t^\e$. If $s\le  r$, then $\Delta_s^\e \subset \Delta_r^\e$. This implies that the cylinder $T_s^{\e,+}\setminus T_s^{\e,-}$ is almost contained in $T_r^{\e,+}\setminus T_r^{\e,-}$. Let $\theta$ be the angle between $n_s$ and $n_r$, then a geometric observation shows that
\begin{equation*}
|n_r - n_s| \approx \sin \theta \le \frac{r\zeta(r,\e/r)}{s}.
\end{equation*}
This is the desired estimate.

\subsection{Excess quantities}
Let $u_\e \in H^1(D^\e_2;\R^d)$ be the weak solution of
\begin{equation}\label{eq.ue.bdry}
\left\{
\begin{aligned}
\nabla\cdot (A^\e \nabla u_\e) &= 0 \qquad &\txt{in } &D^\e_{2}, \\
u_\e  &=0  \qquad &\txt{on } & \Delta^\e_{2}.
\end{aligned}
\right.
\end{equation}
We define two quantities $\Phi$ and $H$ as follows: for any $v\in H^1(D^\e_t;\R^d)$
\begin{equation}\label{def.Phi}
\Phi(t;v)  =\frac{1}{t} \bigg( \fint_{D^\e_t } |v|^2 \bigg)^{1/2} 
\end{equation}
and
\begin{equation}\label{def.H}
H(t;v) = \frac{1}{t}\inf_{q\in\R^d} \bigg( \fint_{D^\e_{t} } |v - (n_t\cdot x)q|^2 \bigg)^{1/2}.
\end{equation}
Put $\Phi(t) = \Phi(t;u_\e), H(t) = H(t;u_\e)$ for short. In view of the Poincar\'{e} and Caccioppoli inequalities, the large-scale Lipschitz estimate is equivalent to the estimate of $\Phi(t)$ for $t\in (\e,1)$.

We need some basic properties of $\Phi$ and $H$. First of all, it is clear that for any $r>0$
\begin{equation}\label{eq.H<Phi}
H(r) \le \Phi(r). 
\end{equation}
Using the flatness of $D^\e$, it is not hard to see
\begin{equation}\label{est.HPhi}
\sup_{r\le s\le 2r}\Phi(s) \le C\Phi(2r),
\end{equation}
for any $r\in (\e,1)$.

\begin{lemma}\label{lem.h.bdry}
	There exists a function $h:(0,2) \mapsto [0,\infty)$ so that for any $r\in (\e,1)$
	\begin{equation}\label{est.hHPhi}
	\left\{
	\begin{aligned}
	h(r) & \le C(H(r) + \Phi(r)) \\
	\Phi(r) &\le H(r) + h(r) \\
	\sup_{r\le s,t\le 2r} |h(s) - h(t)| &\le CH(2r) + C\zeta(2r,\e/2r)\Phi(2r).
	\end{aligned}\right.
	\end{equation}
\end{lemma}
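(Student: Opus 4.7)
I would define $h(r) := |q_r|$, where $q_r \in \R^d$ is the minimizer in the infimum defining $H(r)$; existence and uniqueness follow from strict convexity of the quadratic $q \mapsto \fint_{D^\e_r}|u_\e - (n_r\cdot x)q|^2$. The key geometric input is the two-sided estimate
\[
c r^2 \le \fint_{D^\e_r}|n_r\cdot x|^2 \le r^2.
\]
The upper bound is trivial; the lower bound follows because $T^{\e,-}_r \subset D^\e_r$ contains a ball of radius $\simeq r$ on which $|n_r\cdot x|\simeq r$ (using the standing assumption $\zeta\le 1/2$). Combined with the triangle inequality applied to $u_\e = [u_\e - (n_r\cdot x)q_r] + (n_r\cdot x)q_r$ in $L^2(D^\e_r)$, the upper bound yields $\Phi(r) \le H(r) + h(r)$, while the lower bound yields $h(r) \le C(H(r) + \Phi(r))$. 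This disposes of the first two inequalities in (\ref{est.hHPhi}).

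For the third inequality, fix $r\le s\le t\le 2r$; since $|h(s) - h(t)| \le |q_s - q_t|$, it suffices to prove $|q_s - q_t| \le CH(2r) + C\zeta(2r,\e/2r)\Phi(2r)$. As a preliminary step I would show
\[
H(s) \le CH(2r) + C\zeta(2r,\e/2r)\Phi(2r) \qquad (s \in [r,2r]).
\]
This is obtained by testing the infimum defining $H(s)$ with $q = q_{2r}$, inserting $\pm(n_{2r}\cdot x)q_{2r}$, and using Lemma \ref{lem.n_r} in the form $s\,|n_s - n_{2r}| \le Cr\zeta(2r,\e/2r)$, together with the bound $|q_{2r}| \le C(\Phi(2r) + H(2r))$ from the first part and $\zeta \le 1/2$ to absorb an extra multiple of $H(2r)$.

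Next, on $D^\e_s \subset D^\e_t$ use the identity
\[
(n_s\cdot x)q_s - (n_t\cdot x)q_t = [u_\e - (n_t\cdot x)q_t] - [u_\e - (n_s\cdot x)q_s].
\]
Taking $L^2(D^\e_s)$-norms and noting $|D^\e_t|/|D^\e_s| \le C$ yields
\[
\bigg(\fint_{D^\e_s}|(n_s\cdot x)q_s - (n_t\cdot x)q_t|^2\bigg)^{1/2} \le Cr\big\{H(2r) + \zeta(2r,\e/2r)\Phi(2r)\big\}
\]
by the preliminary step. Finally, I would decompose
\[
(n_s\cdot x)(q_s-q_t) = \big[(n_s\cdot x)q_s - (n_t\cdot x)q_t\big] + \big((n_t-n_s)\cdot x\big)q_t,
\]
control the remainder via $t\,|n_s - n_t| \le Cr\zeta(2r,\e/2r)$ (Lemma \ref{lem.n_r}) and $|q_t| \le C(\Phi(2r)+H(2r))$, and extract $|q_s - q_t|$ using the lower bound $\fint_{D^\e_s}|n_s\cdot x|^2 \ge cr^2$.

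\textbf{Main obstacle.} The delicate point is ensuring that the $\Phi(2r)$ contribution on the right-hand side of the third inequality is always accompanied by the smallness factor $\zeta(2r,\e/2r)$, rather than appearing bare; the excess-decay iteration driving Theorem \ref{thm.main} crucially exploits this smallness. This forces every comparison of $q_s$ with $q_t$ to be routed through the intermediate linear profile $(n_{2r}\cdot x)q_{2r}$, so that every difference of approximate normals can be converted into $\zeta(2r,\e/2r)$ via Lemma \ref{lem.n_r}, and every residual bare $H$-term absorbed using $\zeta \le 1/2$.
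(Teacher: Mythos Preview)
Your proposal is correct and follows essentially the same strategy as the paper: define $h(r)=|q_r|$, use the two-sided bound on $\fint|n_r\cdot x|^2$ for the first two inequalities, and for the third compare $q_s,q_t$ through the linear profiles and Lemma~\ref{lem.n_r}. The only organizational difference is that the paper pivots on the common normal $n_{2r}$ (writing $(n_{2r}\cdot x)(q_s-q_t)=[u_\e-(n_{2r}\cdot x)q_t]-[u_\e-(n_{2r}\cdot x)q_s]$ in $D^\e_r$ and then passing from $n_{2r}$ to $n_s,n_t$ and back), whereas you first isolate the intermediate estimate $H(s)\le CH(2r)+C\zeta(2r,\e/2r)\Phi(2r)$ and then pivot on $n_s$ in $D^\e_s$; the triangle-inequality manipulations are the same in substance.
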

\begin{proof} 
	Let $q_r$ be the vector that minimizes $H(r)$, namely,
	\begin{equation}\label{def.re.H}
	H(r) = \frac{1}{r} \bigg( \fint_{D^\e_{r} } |u_\e - (n_r\cdot x)q_r|^2 \bigg)^{1/2}.
	\end{equation}
	Define $h(r) = |q_r|$. To see the first inequality of (\ref{est.hHPhi}), observe that
	\begin{equation*}
	\bigg( \fint_{D_r^\e} |n_r\cdot x|^2 \bigg)^{1/2} \ge cr,
	\end{equation*}
	for some absolute constant $c>0$. Hence,
	\begin{equation*}
	\begin{aligned}
	h(r) & \le |q_r| \frac{C}{r}\bigg( \fint_{D_r^\e} |n_r\cdot x|^2 \bigg)^{1/2} = \frac{C}{r}\bigg( \fint_{D_r^\e} |(n_r\cdot x)q_r|^2 \bigg)^{1/2} \\
	& \le \frac{C}{r} \bigg( \fint_{D^\e_{r} } |u_\e - (n_r\cdot x)q_r|^2 \bigg)^{1/2} + \frac{C}{r} \bigg( \fint_{D^\e_{r} } |u_\e|^2 \bigg)^{1/2}.
	\end{aligned}
	\end{equation*}
	This proves the first inequality of (\ref{est.hHPhi}). The second inequality follows easily by the triangle inequality.
	
	Now, we prove the third inequality of (\ref{est.hHPhi}). By the first inequality of (\ref{est.hHPhi}) and (\ref{eq.H<Phi}), $h(r) \le C\Phi(r)$. Since $r\in (\e,1)$, the flatness condition implies that $|D^\e_r|\simeq r^d$ and $|n_{2r}\cdot x| \ge C>0$ in a subset of $D^\e_r$ with volume comparable to $r^d$.
	Therefore, for any $s,t\in [r,2r]$, one has
	\begin{equation}\label{est.qsqt}
	\begin{aligned}
	|q_s - q_t| &\le \frac{C}{r} \bigg( \fint_{D^\e_{r} } |(n_{2r}\cdot x)(q_s - q_t)|^2 \bigg)^{1/2} \\
	& \le \frac{C}{r} \bigg( \fint_{D^\e_{r} } |u_\e - (n_{2r}\cdot x)q_s)|^2 \bigg)^{1/2} + \frac{C}{r} \bigg( \fint_{D^\e_{r} } |u_\e - (n_{2r}\cdot x)q_t)|^2 \bigg)^{1/2}.
	\end{aligned}
	\end{equation}
	We estimate the first term. Using Lemma \ref{lem.n_r} and (\ref{est.HPhi}), we have
	\begin{equation*}
	\begin{aligned}
	& \frac{1}{r} \bigg( \fint_{D^\e_{r} } |u_\e - (n_{2r}\cdot x)q_s)|^2 \bigg)^{1/2} \\
	&\le \frac{C}{r} \bigg( \fint_{D^\e_{s} } |u_\e - (n_{s}\cdot x)q_s)|^2 \bigg)^{1/2} + |n_{2r}-n_s||q_s| \\
	& \le \frac{C}{s} \inf_{q\in \R^d}\bigg( \fint_{D^\e_{s} } |u_\e - (n_{s}\cdot x)q)|^2 \bigg)^{1/2} + C\zeta(2r,\e/2r) \Phi(2r) \\
	& \le \frac{C}{s}  \bigg( \fint_{D^\e_{s} } |u_\e - (n_{s}\cdot x)q_{2r})|^2 \bigg)^{1/2} + C\zeta(2r,\e/2r) \Phi(2r) \\
	& \le \frac{C}{2r} \bigg( \fint_{D^\e_{2r} } |u_\e - (n_{2r}\cdot x)q_{2r})|^2 \bigg)^{1/2} + |n_{2r} - n_s| |q_{2r}| + C\zeta(2r,\e/2r) \Phi(2r) \\
	& \le CH(2r) + C\zeta(2r,\e/2r) \Phi(2r),
	\end{aligned}
	\end{equation*}
	The estimate for the second term of (\ref{est.qsqt}) is similar. Hence, for any $s,t\in [r,2r]$,
	\begin{equation*}
	|h(s) - h(t)| \le |q_s -q_t| \le CH(2r) + C\zeta(2r,\e/2r) \Phi(2r),
	\end{equation*}
	as desired.
\end{proof}

\subsection{Excess decay estimate}
In this subsection, we will prove a convergence rate for the system (\ref{eq.we.D}). Since $u_\e\in H^1(B_{2r};\R^d)$ for any $r\in (0,1)$, by the co-area formula, without loss of generality, we may assume $u_\e|_{\partial T_r^{\e,+}} \in H^1(\partial T_r^{\e,+};\R^d)$. Moreover, note that $T_r^{\e,+}$ is a Lipschitz domain. Let $w_\e = w_\e^r$ be the weak solution of (\ref{eq.we.D}). By a standard result of the convergence rate in periodic homogenization \cite{S17}, we have
\begin{equation}\label{est.rate}
\norm{w_\e - w_0}_{L^2(T_r^{\e,+})} \le C r (\e/r)^{1/2} \norm{\nabla u_\e}_{L^2(D^\e_{2r})},
\end{equation}
where $w_0 = w_0^r$ is the solution of the homogenized system in $T_r^{\e,+}$:
\begin{equation}\label{eq.w0}
\left\{ 
\begin{aligned}
\nabla\cdot (\widehat{A} \nabla w_0) &= 0 \qquad \text{in } T_r^{\e,+},\\
w_0 &= u_\e \qquad \text{on } \partial T_r^{\e,+},
\end{aligned}
\right.
\end{equation}
where $\widehat{A}$ is the homogenized coefficient matrix of $A$.

Let
\begin{equation*}
\widetilde{H}(r,a;w_\e) = \inf_{q\in \R^d} \frac{1}{a r}\bigg( \fint_{T_r^{\e,+}\cap B_{a r}} |w_\e -(n_r\cdot x) q|^2 \bigg)^{1/2}.
\end{equation*}
By using the smoothness of $w_0$ near the flat boundary, we may show
\begin{lemma}\label{lem.we.decay}
	There is a constant $\theta\in (0,1)$ such that for any $r\in (\e,1)$
	\begin{equation*}
	\widetilde{H}(r,\theta;w_\e)
	\le \frac{1}{2} \widetilde{H}(r,1;w_\e) + C \big( (\e/r)^{1/2} + \zeta(r,\e/r) \big) \Phi(4r).
	\end{equation*}
\end{lemma}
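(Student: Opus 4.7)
The plan is a two-step approximation: first bridge $w_\e$ to its homogenization $w_0$ via the rate of convergence (\ref{est.rate}), and then bridge $w_0$ to an origin-based directional linear profile using the fact that $w_0$ solves a constant-coefficient system and vanishes identically on the flat top $P_r := \{x\cdot n_r = r\zeta(r,\e/r)\}\cap B_r$ of $\partial T_r^{\e,+}$ (since $u_\e\equiv 0$ there after zero-extension). For the first step, (\ref{est.rate}) together with the Caccioppoli inequality for $u_\e$ gives
\[
\frac{1}{ar}\bigg(\fint_{T_r^{\e,+}\cap B_{ar}}|w_\e - w_0|^2\bigg)^{1/2}\le C_\theta (\e/r)^{1/2}\Phi(4r),\qquad a\in\{1,\theta\},
\]
which will let me replace $w_\e$ by $w_0$ in both arguments of $\widetilde H$ at the cost of an additive error $C_\theta(\e/r)^{1/2}\Phi(4r)$.

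For the second step, let $q^*$ be a minimizer of $\widetilde H(r,1;w_\e)$ and define the auxiliary function $\tilde w_0(x) := w_0(x)-(n_r\cdot x)q^* + r\zeta(r,\e/r)q^*$. Then $\tilde w_0$ still solves the homogenized system and, by construction, vanishes on the flat plate $P_r$. Classical boundary regularity for constant-coefficient elliptic systems up to a flat portion of the boundary gives $\tilde w_0\in C^\infty(T_r^{\e,+}\cap B_{r/2})$ with
\[
\|\nabla\tilde w_0\|_{L^\infty(T_r^{\e,+}\cap B_{r/2})}+r\|\nabla^2\tilde w_0\|_{L^\infty(T_r^{\e,+}\cap B_{r/2})}\le \frac{C}{r}\bigg(\fint_{T_r^{\e,+}\cap B_r}|\tilde w_0|^2\bigg)^{1/2}.
\]
Taylor-expanding at the foot $p := r\zeta(r,\e/r)n_r\in P_r$ of the origin, where $\tilde w_0(p)=0$ and the tangential trace forces $\nabla\tilde w_0(p) = q\otimes n_r$ for some $q\in\R^d$, gives $|\tilde w_0(x)-(x\cdot n_r - r\zeta(r,\e/r))q|\le C|x-p|^2\|\nabla^2\tilde w_0\|_{L^\infty}$ pointwise on $T_r^{\e,+}\cap B_{\theta r}$.

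The third step rewrites the boundary-adapted profile as $(x\cdot n_r - r\zeta(r,\e/r))q = (n_r\cdot x)q - r\zeta(r,\e/r)q$. The constant discrepancy contributes $\zeta(r,\e/r)|q|/\theta$ to $\widetilde H(r,\theta;\tilde w_0)$ after dividing by $\theta r$; combining with the $O(\theta)$ Taylor remainder and the shift between $\tilde w_0$ and $w_0$ (controlled via $|q^*|\le C\Phi(4r)$, which follows from $(\fint_{T_r^{\e,+}}|n_r\cdot x|^2)^{1/2}\simeq r$ and the triangle inequality), I obtain
\[
\widetilde H(r,\theta;w_0)\le C\bigl[\theta + \zeta(r,\e/r)/\theta\bigr]\cdot\frac{1}{r}\bigg(\fint_{T_r^{\e,+}\cap B_r}|\tilde w_0|^2\bigg)^{1/2} + C_\theta\zeta(r,\e/r)\Phi(4r).
\]
Since $\tilde w_0 = (w_\e-(n_r\cdot x)q^*) + (w_0-w_\e) + r\zeta(r,\e/r)q^*$, the triangle inequality, the definition of $q^*$, Step 1, and $|q^*|\le C\Phi(4r)$ dominate $\frac{1}{r}(\fint_{T_r^{\e,+}\cap B_r}|\tilde w_0|^2)^{1/2}$ by $\widetilde H(r,1;w_\e) + C[(\e/r)^{1/2}+\zeta(r,\e/r)]\Phi(4r)$. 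Fixing $\theta$ small so that $C\theta\le 1/4$ and absorbing the $\zeta/\theta\cdot\widetilde H(r,1;w_\e)$ remainder into the error via $\widetilde H(r,1;w_\e)\le C\Phi(4r)$ then yields the claim with a constant $C$ depending on the chosen $\theta$.

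The main obstacle will be the geometric mismatch highlighted in the third step: the natural linear functions vanishing on the flat top of $T_r^{\e,+}$ live on the shifted plane $P_r$, whereas $\widetilde H$ is defined through origin-based profiles $(n_r\cdot x)q$. The $\zeta(r,\e/r)\Phi(4r)$ term in the statement is precisely the price for reconciling these two families of linear functions, and the careful choice of $\tilde w_0$ (subtracting $(n_r\cdot x)q^*$ and adding back $r\zeta(r,\e/r)q^*$) is what allows the $\widetilde H(r,1;w_\e)$ factor on the right to appear with the optimal coefficient $1/2$ rather than merely $C\Phi(4r)$.
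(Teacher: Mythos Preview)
Your proof is correct and follows essentially the same route as the paper's. The only cosmetic difference is organizational: the paper works with $w_0$ throughout and invokes the boundary $C^{1,1}$/Caccioppoli estimate for $w_0 - (n_r\cdot(x-x_r))q$ with generic $q$ before taking the infimum, whereas you front-load that subtraction by fixing $q^*$ (the minimizer of $\widetilde H(r,1;w_\e)$) and passing to $\tilde w_0 = w_0 - (n_r\cdot(x-x_r))q^*$; both manoeuvres exploit the same flat-boundary regularity and the same $\zeta(r,\e/r)$-shift between origin-based and $x_r$-based linear profiles.
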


\begin{proof}
	By the $C^{1,1}$ regularity of $w_0$ on the flat boundary, we know
	\begin{equation*}
	\norm{\nabla^2 w_0}_{L^\infty(T_r^{\e,+}\cap B_{r/4})} + r^{-1} \norm{\nabla w_0}_{L^\infty(T_r^{\e,+}\cap B_{r/4})} \le Cr^{-1} \bigg( \fint_{T_r^{\e,+}\cap B_{r/2}} |\nabla w_0|^2 \bigg)^{1/2}.
	\end{equation*}
	Let $x_r$ be the point on the flat boundary $\partial T_r^{\e,+}\cap B_r$ so that it is the closest to the origin. By our assumption, $|x_r| \le Cr\zeta(r,\e/r)$. Clearly, since $w_0$ is identically $0$ on the flat boundary, the tangential derivatives vanish at $x_r$, i.e.,
	\begin{equation*}
	(I -n_r\otimes n_r)\nabla w_0(x_r) = 0.
	\end{equation*}
	Hence,
	\begin{equation*}
	\nabla w_0(x_r) = (n_r\otimes n_r) \nabla w_0(x_r) = n_{r}(n_r\cdot \nabla w_0(x_r)).
	\end{equation*}
	Consequently, by the Taylor expansion of $w_0$ at $x_r$
	\begin{equation*}
	|w_0(x) - w_0(x_r) - (x-x_r)\cdot \nabla w_0(x_r)| \le C|x-x_r|^2 \norm{\nabla^2 w_0}_{L^\infty(T_r^{\e,+}\cap B_{r/4})},
	\end{equation*}
	we have that for any $x\in T_r^{\e,+}\cap B_{r/4}$
	\begin{equation*}
	\begin{aligned}
	& |w_0(x) - (x\cdot n_r) (n_r \cdot \nabla w_0(x_r))| 
	\\ & \quad \le C|x-x_r|^2 \norm{\nabla^2 w_0}_{L^\infty(T_r^{\e,+}\cap B_{r/4})} + C|x_r|\bigg( \fint_{T_r^{\e,+}\cap B_{r/2}} |\nabla w_0|^2 \bigg)^{1/2}.
	\end{aligned}
	\end{equation*}
	Therefore, for any $\theta\in (0,r/4)$,
	\begin{equation}\label{est.w0.Btr}
	\begin{aligned}
	&\inf_{q\in \R^d} \frac{1}{\theta r} \bigg( \fint_{T_r^{\e,+} \cap B_{\theta r}} |w_0 - (n_r\cdot x) q|^2\bigg)^{1/2} \\
	& \qquad \le C\theta r \norm{\nabla^2 w_0}_{L^\infty(T_r^{\e,+}\cap B_{r/4})} + C_\theta  \zeta(r,\e/r) \bigg( \fint_{T_r^{\e,+}\cap B_{r/2}} |\nabla w_0|^2 \bigg)^{1/2}.
	\end{aligned}
	\end{equation}
	
	On the other hand, observe that, for any $q\in \R^d$, $w_0 - (n_r\cdot (x-x_r)) q$ is also a weak solution, i.e.,
	\begin{equation}\label{eq.w0Qq}
	\left\{ 
	\begin{aligned}
	\cL_0 (w_0 - (n_r\cdot (x-x_r)) q) &= 0 \qquad \text{in } T_r^{\e,+},\\
	w_0 - (n_r\cdot (x-x_r)) q &= 0 \qquad \text{on } \partial T_r^{\e,+}\cap B_r.
	\end{aligned}
	\right.
	\end{equation}
	Then, the $C^{1,1}$ estimate for $w_0 - (n_r\cdot (x-x_r)) q$ and the boundary Caccioppoli inequality give
	\begin{equation}\label{est.rD2we}
	r \norm{\nabla^2 w_0}_{L^\infty(T_r^{\e,+}\cap B_{r/4})}
	\le C \inf_{q\in \R^d} \frac{1}{r} \bigg( \fint_{T_r^{\e,+} \cap B_{ 3r/4}} |w_0 - (n_r\cdot (x-x_r)) q|^2\bigg)^{1/2}.
	\end{equation}
	To estimate the right-hand side of the last inequality, we let $q_1$ be the vector so that
	\begin{equation}\label{est.for.q_1}
	\inf_{q\in \R^d} \frac{1}{r} \bigg( \fint_{T_r^{\e,+} \cap B_{ 3r/4}} |w_0 - (n_r\cdot x) q|^2\bigg)^{1/2} =  \frac{1}{r} \bigg( \fint_{T_r^{\e,+} \cap B_{ 3r/4}} |w_0 - (n_r\cdot x) q_1|^2\bigg)^{1/2}.
	\end{equation}
	From a simple geometrical observation, $|n_r\cdot x| \ge Cr$ in a large portion of $T_r^{\e,+} \cap B_{ 3r/4}$. Hence, by the triangle inequality, one has
	\begin{equation*}
	\frac{1}{r} \bigg( \fint_{T_r^{\e,+} \cap B_{ 3r/4}} |w_0 - (n_r\cdot x) q_1|^2\bigg)^{1/2} \ge c |q_1| - \frac{1}{r} \bigg( \fint_{T_r^{\e,+}\cap B_{3r/4} } |w_0|^2 \bigg)^{1/2}.
	\end{equation*}
	Clearly,
	\begin{equation*}
		\inf_{q\in \R^d} \frac{1}{r} \bigg( \fint_{T_r^{\e,+} \cap B_{ 3r/4}} |w_0 - (n_r\cdot x) q|^2\bigg)^{1/2} \le \frac{1}{r}\bigg( \fint_{T_r^{\e,+}\cap B_{3r/4} } |w_0|^2 \bigg)^{1/2}.
	\end{equation*}
	It follows from the last three inequalities that
	\begin{equation}\label{est.q1}
	|q_1| \le \frac{C}{r} \bigg( \fint_{T_r^{\e,+}\cap B_{3r/4} } |w_0|^2 \bigg)^{1/2}.
	\end{equation}
	As a consequence, the triangle inequality yields
	\begin{equation}\label{key}
	\begin{aligned}
	& \inf_{q\in \R^d} \frac{1}{r} \bigg( \fint_{T_r^{\e,+} \cap B_{ 3r/4}} |w_0 - (n_r\cdot (x-x_r)) q|^2\bigg)^{1/2} -  \frac{1}{r} \bigg( \fint_{T_r^{\e,+} \cap B_{ 3r/4}} |w_0 - (n_r\cdot x) q_1|^2\bigg)^{1/2} \\
	& \le \inf_{q\in \R^d} \frac{1}{r} \bigg( \fint_{T_r^{\e,+} \cap B_{ 3r/4}} |(n_r\cdot x) (q-q_1) - (n_r\cdot x_r)q|^2\bigg)^{1/2} \\
	& \le \inf_{q\in \R^d} \frac{1}{r} \bigg( \fint_{T_r^{\e,+} \cap B_{ 3r/4}} | (n_r\cdot x_r)q_1|^2\bigg)^{1/2} \\
	& \le C \frac{|x_r|}{r^2} \bigg( \fint_{T_r^{\e,+}\cap B_{3r/4} } |w_0|^2 \bigg)^{1/2},
	\end{aligned}
	\end{equation}
	where we have used (\ref{est.q1}) in the last inequality. This shows that
	\begin{equation}\label{key}
	\begin{aligned}
	& \inf_{q\in \R^d} \frac{1}{r} \bigg( \fint_{T_r^{\e,+} \cap B_{ 3r/4}} |w_0 - (n_r\cdot (x-x_r)) q|^2\bigg)^{1/2} \\
	& \quad \le \inf_{q\in \R^d} \frac{1}{r} \bigg( \fint_{T_r^{\e,+} \cap B_{ 3r/4}} |w_0 - (n_r\cdot x) q|^2\bigg)^{1/2} + C \frac{\zeta(r,\e/r)}{r} \bigg( \fint_{T_r^{\e,+}\cap B_{3r/4} } |w_0|^2 \bigg)^{1/2}.
	\end{aligned}
	\end{equation}
	Now, by (\ref{est.rD2we}) and the last inequality, we obtain
	\begin{equation}\label{est.Dw0.Br}
	\begin{aligned}
	r \norm{\nabla^2 w_0}_{L^\infty(T_r^{\e,+}\cap B_{r/4})} & \le C\inf_{q\in \R^d} \frac{1}{r} \bigg( \fint_{T_r^{\e,+} } |w_0 - (n_r\cdot x) q|^2\bigg)^{1/2} \\
	& \qquad + C\frac{\zeta(r,\e/r)}{r} \bigg( \fint_{T_r^{\e,+}\cap B_{3r/4} } |w_0|^2 \bigg)^{1/2}.
	\end{aligned}
	\end{equation}
	Inserting this into (\ref{est.w0.Btr}) and using the Poincar\'{e} inequality, we arrive at
	\begin{equation*}
	\begin{aligned}
	& \inf_{q\in \R^d} \frac{1}{\theta r} \bigg( \fint_{T_r^{\e,+} \cap B_{\theta r}} |w_0 - (n_r\cdot x) q|^2\bigg)^{1/2} \\
	&\le C\theta \inf_{q\in \R^d} \frac{1}{r} \bigg( \fint_{T_r^{\e,+} } |w_0 - (n_r\cdot x) q|^2\bigg)^{1/2} + C_\theta \zeta(r,\e/r) \bigg( \fint_{T_r^{\e,+}\cap B_{3r/4} } |\nabla w_0|^2 \bigg)^{1/2}.
	\end{aligned}
	\end{equation*}
	Using (\ref{est.rate}), we have
	\begin{equation*}
	\begin{aligned}
	&\inf_{q\in \R}\frac{1}{\theta r} \bigg( \fint_{T_r^{\e,+}\cap B_{\theta r}} |w_\e -(n_r\cdot x) q|^2 \bigg)^{1/2} \\
	&\le C\theta \inf_{q\in \R} \frac{1}{r} \bigg( \fint_{T_r^{\e,+}} |w_\e -(n_r\cdot x) q|^2 \bigg)^{1/2} + C_\theta \big( (\e/r)^{1/2} + \zeta(r,\e/r) \big) \bigg( \fint_{D^\e_{2r}} |\nabla u_\e|^2 \bigg)^{1/2},
	\end{aligned}
	\end{equation*}
	where we also used the energy estimates
	\begin{equation*}
	\bigg( \fint_{T_r^{\e,+}} |\nabla w_0|^2 \bigg)^{1/2} \le C\bigg( \fint_{T_r^{\e,+}} |\nabla w_\e|^2 \bigg)^{1/2} \le C\bigg( \fint_{T_r^{\e,+}} |\nabla u_\e|^2 \bigg)^{1/2}.
	\end{equation*}
	Finally, choosing $\theta$ so that $C\theta = \frac{1}{2}$ and applying the Caccioppoli inequality to $u_\e$, we obtain the desired estimate.
\end{proof}


Combining Theorem \ref{thm.improved.approx} and Lemma \ref{lem.we.decay}, we have
\begin{lemma}\label{lem.H}
	Let $\sigma\in (0,1/2)$. There are $\theta\in (0,1)$ and $\e_0\in (0,1)$ such that if $r\in (\e/\e_0,\e_0)$
	\begin{equation*}
	H(\theta r) \le \frac{1}{2}H(r) + C\big( \e/r+\zeta(r,\e/r)\big)^\sigma \Phi(40r).
	\end{equation*}
\end{lemma}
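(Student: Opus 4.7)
The plan is to combine the decay estimate of Lemma \ref{lem.we.decay} for the auxiliary solution $w_\e$ on the Lipschitz domain $T_r^{\e,+}$ with the mesoscopic approximation in Theorem \ref{thm.improved.approx}. The strategy is a three-step triangle-inequality chain: (i) bound $H(\theta r;u_\e)$ in terms of $\widetilde{H}(r,\theta;w_\e)$ plus approximation and normal-vector errors; (ii) apply Lemma \ref{lem.we.decay} to pass $\widetilde{H}(r,\theta;w_\e)$ to $\tfrac{1}{2}\widetilde{H}(r,1;w_\e)$ plus a small error; (iii) bound $\widetilde{H}(r,1;w_\e)$ back by $H(r;u_\e)$ plus approximation and thin-slab errors.

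For step (i), writing $u_\e - (n_{\theta r}\cdot x)q = (u_\e - w_\e) + (w_\e - (n_r\cdot x)q) + ((n_r - n_{\theta r})\cdot x)q$, I would control the first piece via Poincar\'{e} on the Lipschitz domain $T_r^{\e,+}$ (valid since $u_\e - w_\e\in H_0^1(T_r^{\e,+})$), Theorem \ref{thm.improved.approx}, and the Caccioppoli inequality to convert $\|\nabla u_\e\|_{L^2(B_{20r})}$ into $\Phi(40r)$. The third piece is handled by Lemma \ref{lem.n_r}: $|n_r - n_{\theta r}|\le C\zeta(r,\e/r)/\theta$, and combined with $|x|\le \theta r$ and $|q|\le Ch(r)\le C\Phi(r)$ from Lemma \ref{lem.h.bdry} it is $O(\zeta\,\Phi(r))$. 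The middle piece equals $\widetilde{H}(r,\theta;w_\e)$ up to the volume ratio $|T_r^{\e,+}\cap B_{\theta r}|/|D^\e_{\theta r}|$, which is $1+O(\zeta/\theta)$. Step (ii) is a direct application of Lemma \ref{lem.we.decay}. For step (iii), with $q_r$ optimal in the definition of $H(r;u_\e)$, the only new error relative to step (i) comes from replacing the integral over $T_r^{\e,+}$ by one over $D_r^\e$; since $u_\e\equiv 0$ on $T_r^{\e,+}\setminus D_r^\e$ by zero-extension, this picks up $\int_{T_r^{\e,+}\setminus D_r^\e}|(n_r\cdot x)q_r|^2$, where $|n_r\cdot x|\le r\zeta$, $|T_r^{\e,+}\setminus D_r^\e|\le Cr^d\zeta$, and $|q_r|\lesssim \Phi(r)$, contributing $O(\zeta^{3/2}\Phi(r))$.

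Chaining (i)--(iii), I would fix $\theta$ to be the constant from Lemma \ref{lem.we.decay} and then choose $\e_0$ via (\ref{cond.flatness}) small enough that $\zeta(r,\e/r)/\theta$ is arbitrarily small on $r\in(\e/\e_0,\e_0)$. Since $\sigma\in(0,1/2)$ and $\zeta\le 1$, the auxiliary errors $O(\zeta\Phi)$ and $O(\zeta^{3/2}\Phi)$ are all dominated by $C(\e/r+\zeta)^\sigma\Phi(40r)$. The volume-ratio factor $1 + O(\zeta/\theta)$ multiplying $\widetilde{H}(r,1;w_\e)$ is absorbed using $H(r)\le \Phi(r)$ (cf.\ (\ref{eq.H<Phi})) and the same observation, which keeps the effective coefficient in front of $H(r)$ at $1/2$.

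The principal obstacle is coordinating several independent sources of error---(a) the approximation rate from Theorem \ref{thm.improved.approx}, (b) the rotation of $n_r$ with $r$, and (c) the geometric gap between $D_r^\e$ and $T_r^{\e,+}$---so that the coefficient of $H(r)$ on the right-hand side is exactly $1/2$ rather than merely bounded, which is essential for the subsequent iteration in Lemma \ref{lem.iteration}. The saving is that Theorem \ref{thm.improved.approx} supplies the dominant exponent $\sigma<1/2$ while (b) and (c) produce strictly better powers of $\zeta$, so that the flatness condition (\ref{cond.flatness}) renders all secondary errors negligible once $\theta$ is fixed.
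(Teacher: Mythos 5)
Your proposal is correct and follows essentially the same route as the paper: a triangle-inequality chain through $w_\e$, Lemma \ref{lem.we.decay} for the decay, Theorem \ref{thm.improved.approx} plus Poincar\'{e}--Caccioppoli for $H(\cdot;u_\e-w_\e)$, Lemma \ref{lem.n_r} for the rotation of $n_r$, and thin-slab volume bounds for the gap between $D_r^\e$ and $T_r^{\e,+}$, with the extra factor on $H(r)$ absorbed via $H\le\Phi$. The only (harmless) cosmetic difference is in step (iii), where you exploit $u_\e\equiv 0$ on $T_r^{\e,+}\setminus D_r^\e$ after swapping $w_\e$ for $u_\e$, whereas the paper keeps $w_\e$ and controls it on the slab by a Poincar\'{e} inequality in the normal direction.
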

\begin{proof}
	Observe that the triangle inequality for $H$ implies
	\begin{equation*}
	|H(t;f) - H(t;g)| \le H(t;f-g),
	\end{equation*}
	for any $f,g\in L^2(D^\e_t;\R^d)$. Applying this to $u_\e$ and $w_\e$, we obtain
	\begin{equation}\label{est.Htr.ue}
	\begin{aligned}
	H(\theta r; u_\e) & \le H(\theta r; w_\e) + H(\theta r; u_\e - w_\e) \\
	& \le \widetilde{H}(r,\theta;w_\e) + ( H(\theta r;w_\e) - \widetilde{H}(r,\theta;w_\e) ) + H(\theta r; u_\e - w_\e)\\
	& \le \frac{1}{2} \widetilde{H}(r,1;w_\e) + ( H(\theta r;w_\e) - \widetilde{H}(r,\theta;w_\e) ) + H(\theta r; u_\e - w_\e) \\
	& \qquad + C\big( (\e/r)^{1/2} + \zeta(r,\e/r) \big) \Phi(4r) \\
	& \le \frac{1}{2} H(r; u_\e) + ( H(\theta r;w_\e) - \widetilde{H}(r,\theta;w_\e) ) + \frac{1}{2}(\widetilde{H}(r,1;w_\e) - H(r;w_\e))  \\
	&\qquad + H(\theta r; u_\e - w_\e) + H(r;u_\e - w_\e)
	+ C\big( (\e/r)^{1/2} + \zeta(r,\e/r) \big) \Phi(4r).
	\end{aligned}
	\end{equation}
	
	We first estimate
	\begin{equation*}
	\begin{aligned}
	I &:= H(\theta r;w_\e) - \widetilde{H}(r,\theta;w_\e) \\
	& = \frac{1}{\theta r} \inf_{q\in \R^d} \bigg( \fint_{D^\e_{\theta r} } |w_\e - (n_{\theta r}\cdot x)q|^2 \bigg)^{1/2} - \frac{1}{\theta r}\inf_{q\in \R^d}  \bigg( \fint_{T_r^{\e,+} \cap B_{\theta r}} |w_\e - (n_r\cdot x) q|^2\bigg)^{1/2} \\
	& \le \frac{1}{\theta r} \inf_{q\in \R^d} \bigg( \fint_{D^\e_{\theta r} } |w_\e - (n_{r}\cdot x)q|^2 \bigg)^{1/2} - \frac{1}{\theta r}\inf_{q\in \R^d}  \bigg( \fint_{T_r^{\e,+} \cap B_{\theta r}} |w_\e - (n_r\cdot x) q|^2\bigg)^{1/2} \\
	& \qquad +  \frac{C|n_r - n_{\theta r}|}{\theta r} \bigg( \fint_{T_{r}^+ } |w_\e|^2 \bigg)^{1/2}.
	\end{aligned}
	\end{equation*}
	Using Lemma \ref{lem.n_r}, the Poincar\'{e} and Caccioppoli inequalities, we see that the last term in the above inequality is bounded by $C\zeta(r,\e/r) \Phi(2r) $. Now,
	observe that
	\begin{equation*}
	\inf_{q\in \R^d} \bigg( \fint_{D^\e_{\theta r} } |w_\e - (n_{r}\cdot x)q|^2 \bigg)^{1/2} \le \frac{|T_r^{\e,+}\cap B_{\theta r}|^{1/2}}{|D^\e_{\theta r}|^{1/2}} \inf_{q\in \R^d} \bigg( \fint_{T_{r}^+ \cap B_{\theta r} } |w_\e - (n_{r}\cdot x)q|^2 \bigg)^{1/2}.
	\end{equation*}
	By the definition of $T_r^{\e,+}$ and our assumption
	\begin{equation*}
	\frac{|T_r^{\e,+}\cap B_{\theta r}|^{1/2}}{|D^\e_{\theta r}|^{1/2}} = \bigg(1 + \frac{|T_r^{\e,+}\cap B_{\theta r}\setminus D^\e_{\theta r}|}{|D^\e_{\theta r}|} \bigg)^{1/2} \le 1 + C\zeta(r,\e/r)^{1/2}.
	\end{equation*}
	Consequently,
	\begin{equation*}
	I \le C\zeta(r,\e/r)^{1/2} \Phi(2r).
	\end{equation*}
	
	Next, we estimate
	\begin{equation*}
	\begin{aligned}
	II &:= \widetilde{H}(r,1;w_\e) - H(r;w_\e) \\
	& = \frac{1}{r}\inf_{q\in \R^d}  \bigg( \fint_{T_r^{\e,+}} |w_\e - (n_r\cdot x) q|^2\bigg)^{1/2} - \frac{1}{r} \inf_{q\in \R^d} \bigg( \fint_{D^\e_{r} } |w_\e - (n_{ r}\cdot x)q|^2 \bigg)^{1/2}.
	\end{aligned}
	\end{equation*}
	To this end, let $q_r$ be the vector so that
	\begin{equation*}
	H(r;w_\e) = \frac{1}{r} \bigg( \fint_{D^\e_{r} } |w_\e - (n_{ r}\cdot x)q_r|^2 \bigg)^{1/2}.
	\end{equation*}
	Recall that $|q_r| \le C\Phi(2r)$. Hence,
	\begin{equation}\label{est.we.Tr+}
	\begin{aligned}
	\frac{1}{r}\inf_{q\in \R^d}  \bigg( \fint_{T_r^{\e,+}} |w_\e - (n_r\cdot x) q|^2\bigg)^{1/2} & \le \frac{1}{r} \bigg( \fint_{T_r^{\e,+}} |w_\e - (n_r\cdot x) q_r|^2\bigg)^{1/2} \\
	& \le \frac{1}{r} \bigg( \fint_{D^\e_r} |w_\e - (n_r\cdot x) q_r|^2\bigg)^{1/2} \frac{|D^\e_r|^{1/2}}{ |T_r^{\e,+}|^{1/2} } \\
	& \qquad + \frac{1}{r} \bigg( \frac{1}{|T_r^{\e,+}|} \int_{T_r^{\e,+}\setminus D^\e_r} |w_\e - (n_r\cdot x) q_r|^2  \bigg)^{1/2}.
	\end{aligned}
	\end{equation}
	Note that $D_r^\e \subset T_r^{\e,+}$ and $T_r^{\e,+}\setminus D^\e_r \subset T_r^{\e,+}\setminus T_r^{\e,+}$. The Poincar\'{e} inequality implies
	\begin{equation*}
	\bigg( \int_{T_r^{\e,+}\setminus D^\e_r} |w_\e|^2  \bigg)^{1/2} \le Cr\zeta(r,\e/r) \bigg( \int_{T_r^{\e,+}\setminus T_r^{\e,-}} |\nabla w_\e|^2  \bigg)^{1/2}.
	\end{equation*}
	Combining this with (\ref{est.we.Tr+}) and the estimate of $|q_r|$, we obtain
	\begin{equation*}
	\frac{1}{r}\inf_{q\in \R^d}  \bigg( \fint_{T_r^{\e,+}} |w_\e - (n_r\cdot x) q|^2\bigg)^{1/2} \le \frac{1}{r} \bigg( \fint_{D^\e_r} |w_\e - (n_r\cdot x) q_r|^2\bigg)^{1/2} + C\zeta(r,\e/r) \Phi(2r),
	\end{equation*}
	which yields
	\begin{equation*}
	II \le C\zeta(r,\e/r) \Phi(2r).
	\end{equation*}
	
	Finally, we estimate $H(r;u_\e -w_\e)$, while the estimate of $H(\theta r; u_\e -w_\e)$ is similar. Indeed, using Theorem \ref{thm.improved.approx} and the Poincar\'{e} inequality, we have that for any $r\in (\e/\e_0,\e_0)$,
	\begin{equation*}
	\begin{aligned}
	H(r;u_\e -w_\e) & = \frac{1}{r} \inf_{q\in \R^d} \bigg( \fint_{D^\e_{r} } |u_\e - w_\e - (n_{ r}\cdot x)q|^2 \bigg)^{1/2} \\
	& \le \frac{1}{r} \bigg( \fint_{D^\e_{r} } |u_\e - w_\e|^2 \bigg)^{1/2} \\
	& \le C\big( \e/r + \zeta(r,\e/r)\big)^\sigma \bigg( \fint_{D^\e_{20r} } |\nabla u_\e|^2 \bigg)^{1/2} \\
	& \le C\big( \e/r + \zeta(r,\e/r)\big)^\sigma \Phi(40r),
	\end{aligned}
	\end{equation*}
	where $\sigma\in (0,1/2)$.
	Combining this with (\ref{est.Htr.ue}) and the estimates of $I$ and $II$, we obtain the desired estimate.
\end{proof}


\subsection{Iteration}
To prove the main theorem, we need an iteration lemma which generalizes \cite[Lemma 8.5]{S17}.
\begin{lemma}\label{lem.iteration}
	Suppose $\eta: (0,1]\times (0,1]\mapsto [0,1]$ is an admissible modulus.
	Let $H,\Phi,h:(0,2]\mapsto [0,\infty)$ be nonnegative functions. Suppose that there exist $\theta\in (0,1/4), \e_0\in (0,\theta)$ and $C_0>0$ so that $H,\Phi$ and $h$ satisfy:
	\begin{itemize}
		\item for every $r\in (\e/\e_0,\e_0)$,
		\begin{equation}\label{subeqn:H}
		H(\theta r) \le \frac{1}{2} H(r) + C_0 \big\{ \eta(r,\e/r)\big\} \Phi(40r)
		\end{equation}
		
		\item for every $r\in (\e,1)$,
		\begin{subequations}\label{est.HTP}
			\begin{align}
			H(r) & \le C_0 \Phi(r) \label{subeqn:a}\\	
			h(r) & \le C_0 \big( H(r) + \Phi(r) \big) \label{subeqn:b}\\
			\Phi(r) &\le C_0\big( H(r) + h(r)\big) \label{subeqn:c}\\
			\sup_{r\le t\le 2r} \Phi(t) & \le C_0\Phi(2r) \label{subeqn:d}\\
			\sup_{r\le s,t\le 2r} |h(s) - h(t)| &\le C_0 H(2r) \label{subeqn:e}
			\end{align}
		\end{subequations}
	\end{itemize}
	Then
	\begin{equation}\label{est.iteration}
	\int_{\e}^{1} \frac{H(r)}{r} dr + \sup_{\e\le r\le 1} \Phi(r) \le C\Phi(2),
	\end{equation}
	where $C$ depends on the parameters except $\e$.
\end{lemma}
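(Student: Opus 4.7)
The plan is a bootstrap argument in two stages. Set $M:=\sup_{\e\le r\le 2}\Phi(r)$; in Stage~1 we show $M\le C\Phi(2)$ by exploiting the Dini condition of admissibility, and in Stage~2 we deduce the integral bound $\int_\e^1 H(r)/r\,dr \le C\Phi(2)$ from the geometric-scale estimates produced in Stage~1.

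For Stage~1, iterate (\ref{subeqn:H}) along the geometric sequence $r_k := \theta^k r_0$, starting from some $r_0 \in [\theta \e_0, \e_0]$ and terminating at the largest $k$ with $r_k > \e/\e_0$. Telescoping and using (\ref{subeqn:d}) to bound $\Phi(40 r_j) \le CM$ gives
\begin{equation*}
H(r_k) \le 2^{-k} H(r_0) + C M \sum_{j=0}^{k-1} 2^{-(k-1-j)} \eta(r_j, \e/r_j).
\end{equation*}
Summing over $k$ and swapping the order of summation yields
\begin{equation*}
\sum_k H(r_k) \le 2 H(r_0) + CM\cdot S, \qquad S := \sum_{j} \eta(r_j, \e/r_j),
\end{equation*}
and by continuity of $\eta$, $S$ is comparable (up to $\log(1/\theta)$) to the Dini integral $\int_{\e/\e_0}^{\e_0} \eta(r,\e/r)\,dr/r$, which by admissibility tends to $0$ uniformly in $\e\in(0,\e_0^2)$ as $\e_0 \to 0$. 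Next, telescoping (\ref{subeqn:e}) along the dyadic sequence $2^{-\ell}$ controls the oscillation of $h$ over $[\e,2]$ by $C\sum_k H(r_k)$ plus a bounded contribution from the fixed range $[\e_0, 2]$. Combined with $H(2), h(2) \le C\Phi(2)$ from (\ref{subeqn:a}) and (\ref{subeqn:b}), and using (\ref{subeqn:c}), this gives $\Phi(r_k) \le C\Phi(2) + CMS$ for every $k$; extending to intermediate scales via (\ref{subeqn:d}) yields
\begin{equation*}
M \le C\Phi(2) + CMS.
\end{equation*}
Choose $\e_0$ small so that $CS \le 1/2$; absorbing produces $M \le 2C\Phi(2)$.

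For Stage~2, reinsert $M\le C\Phi(2)$ into the estimate above to obtain $\sum_k H(r_k) \le C\Phi(2)$. To upgrade to the integral bound, exploit that (\ref{subeqn:H}) holds at every scale in $(\e/\e_0,\e_0)$, not only on the geometric grid: iterating upward from an arbitrary $r$ to a scale near $\e_0$ gives
\begin{equation*}
H(r) \le C(r/\e_0)^\alpha \Phi(2) + CM \sum_{n \ge 0} 2^{-n}\,\eta(\theta^{-n} r, \e \theta^n/r),
\end{equation*}
with $\alpha = \log 2/\log(1/\theta)>0$. Integrating against $dr/r$ and using Fubini, the $\eta$-term is controlled by the same Dini integral as $S$, so is bounded by $C\Phi(2)$; the first term integrates to $C\Phi(2)/\alpha$. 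The range $[\e_0,1]$ contributes at most $\log(1/\e_0)\cdot CM \le C\Phi(2)$ since $\e_0$ is now a fixed constant. Combining with the bound on $M$ yields (\ref{est.iteration}).

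The principal obstacle is the bookkeeping that makes the bootstrap rigorous: the Riemann-sum comparison $S \lesssim \log(1/\theta)^{-1}\int_{\e/\e_0}^{\e_0}\eta(r,\e/r)\,dr/r$ must be justified from only continuity of $\eta$, and the conversion from geometric-scale bounds on $H$ to an integral bound requires applying (\ref{subeqn:H}) at non-geometric scales and carefully exchanging sums and integrals. The delicate choice of $\e_0$ — small enough for the Dini integral to be controllable but fixed \emph{before} bootstrapping — is what validates the absorption step.
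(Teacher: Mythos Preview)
The Riemann-sum comparison you flag as a ``bookkeeping obstacle'' is in fact false as stated, and this is the genuine gap. Continuity of $\eta$ does not yield $S=\sum_j \eta(r_j,\e/r_j) \lesssim \int_{\e/\e_0}^{\e_0}\eta(r,\e/r)\,dr/r$ uniformly in $\e$: an admissible $\eta$ may carry narrow continuous spikes concentrated at the scales $r_j=\theta^j r_0$ (for one fixed $r_0$), with heights $\sim 1/j$ and widths shrinking fast enough that both admissibility conditions hold, yet $S\sim\sum_{j\le j_1(\e)} 1/j$ diverges as $\e\to 0$. Since $CS\le \tfrac12$ is exactly what you need to absorb in Stage~1, the bootstrap collapses for such $\eta$. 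The fix is cheap once seen: $M$ does not depend on $r_0$, so you may take the infimum (or average) of $S(r_0)$ over $r_0\in[\theta\alpha,\alpha]$; by Fubini and the substitution $s=\theta^j r_0$, the average is $(\log(1/\theta))^{-1}\int \eta(s,\e/s)\,ds/s$, which \emph{is} small by admissibility. (Note also that $\e_0$ is fixed by hypothesis; you need a fresh parameter $\alpha\le \e_0$ to shrink.) The secondary mismatch between the dyadic telescoping of $h$ via \eqref{subeqn:e} and the $\theta$-geometric control of $H$ would then require similar care.

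The paper avoids all of this by working with integrals from the start and never introducing $M$. Its key step is to telescope \eqref{subeqn:e} and integrate to obtain $h(t)\le C\Phi(2)+C\int_t^2 H(r)\,dr/r$; inserting this into $\Phi(40r)\le C_0(H(40r)+h(40r))$ and then into the integral of \eqref{subeqn:H} over $(\e/\alpha,\alpha)$ against $dr/r$ yields a closed inequality of the form $\int H/r \le C\Phi(2) + (\tfrac12+o_\alpha(1))\int H/r$, where $o_\alpha(1)$ is controlled directly by the flatness and Dini conditions with no discrete-to-continuous conversion. Absorption gives the integral bound first; $\sup\Phi\le C\Phi(2)$ then follows from \eqref{subeqn:c} and the $h$-estimate.
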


\begin{proof}
	We start from an estimate of $h$. The assumption (\ref{subeqn:e}) on $h$ implies $h(r) \le h(2r) + CH(2r)$. Hence, given any $t\in (\e,1)$
	\begin{equation*}
	\begin{aligned}
	\int_t^{1} \frac{h(r)}{r} dr & \le \int_t^{1} \frac{h(2r)}{r} dr + C_0\int_t^{1} \frac{H(2r)}{r} dr \\
	& = \int_{2t}^{2} \frac{h(r)}{r} dr + C_0\int_{2t}^2 \frac{H(r)}{r} dr.
	\end{aligned}
	\end{equation*}
	It follows from (\ref{subeqn:b}), (\ref{subeqn:a}) and (\ref{subeqn:d}) in sequence that
	\begin{equation*}
	\int_{t}^{2t} \frac{h(r)}{r} dr \le C\Phi(2) + C\int_{2t}^2 \frac{H(r)}{r} dr.
	\end{equation*}
	Hence, by using (\ref{subeqn:e}) again, for every $t\in (\e,1)$,
	\begin{equation}\label{est.ht}
	h(t) \le C\Phi(2) + C\int_{t}^2 \frac{H(r)}{r} dr.
	\end{equation}
	
	Let $\alpha\in (0,\e_0)$ be a small number to be determined. Without loss of generality, assume $\e < \alpha^2 \le \e_0^2$. Integrating (\ref{subeqn:H}) over the interval $[\e/\alpha,\alpha]\subset [\e/\e_0,\e_0]$, we have
	\begin{equation}\label{est.H/r}
	\int_{\e/\alpha}^{\alpha} \frac{H(\theta r)}{r} dr \le \frac{1}{2}\int_{\e/\alpha}^{\alpha} \frac{H(r)}{r} dr + C_0 \int_{\e/\alpha}^{\alpha} \eta(r,\e/r) \Phi(40r) \frac{dr}{r}.
	\end{equation}
	Using the condition (\ref{subeqn:c}), we have
	\begin{equation*}
	\begin{aligned}
	\int_{\e/\alpha}^{\alpha} \eta(r,\e/r) \Phi(40r)\frac{dr}{r} \le C_0 \int_{\e/\alpha}^{\alpha} \eta(r,\e/r) (H(40r) + h(40r)) \frac{dr}{r}.
	\end{aligned}
	\end{equation*}
	Now, we observe that (\ref{est.ht}) implies
	\begin{equation*}
	\begin{aligned}
	\int_{\e/\alpha}^{\alpha} \eta(r,\e/r) H(40r) \frac{dr}{r}& \le \Big( \sup_{r,s\in (0,\alpha)} \eta(r,s) \Big) \int_{\e/\alpha}^{\alpha} H(40r) \frac{dr}{r} \\
	&= \Big( \sup_{r,s\in (0,\alpha)} \eta(r,s) \Big) \int_{40\e/\alpha}^{40\alpha} H(r) \frac{dr}{r},
	\end{aligned}
	\end{equation*}
	and
	\begin{equation*}
	\begin{aligned}
	& \int_{\e/\alpha}^{\alpha} \eta(r,\e/r) h(40r) \frac{dr}{r} \\
	&\qquad \le C\Phi(2) \int_{\e/\alpha}^{\alpha} \eta(r,\e/r) \frac{dr}{r} + C\int_{\e/\alpha}^{\alpha} \eta(r,\e/r) \int_{40r}^2 \frac{H(s)}{s} ds dr.
	\end{aligned}
	\end{equation*}
	Combining the last four inequalities, we obtain
	\begin{equation}\label{est.Hr.theta}
	\begin{aligned}
	& \int_{\theta \e/\alpha}^{\theta\alpha} \frac{H( r)}{r} dr \\
	&\le C\Phi(2) + \bigg[ \frac{1}{2} + C\bigg\{ \sup_{r,s\in (0,\alpha)} \eta(r,s) + \sup_{\e\in (0,\alpha^2)} \int_{\e/\alpha}^{\alpha} \eta(r,\e/r) \frac{dr}{r} \bigg\} \bigg]\int_{\e/\alpha}^2 \frac{H(r)}{r} dr.
	\end{aligned}
	\end{equation}
	
	Now, by the hypothesis of $\eta$, namely, $\eta$ is an admissible modulus, we may choose and fix an $\alpha$ sufficiently small so that
	\begin{equation*}
	\frac{1}{2} + C\bigg\{ \sup_{r,s\in (0,\alpha)} \eta(r,s) + \sup_{\e\in (0,\alpha^2)} \int_{\e/\alpha}^{\alpha} \eta(r,\e/r) \frac{dr}{r} \bigg\} \le \frac{3}{4}.
	\end{equation*}
	It is quite important to note that $\alpha$ is independent of $\e$.
	Consequently, it follows from (\ref{est.Hr.theta}) that
	\begin{equation}\label{est.Hr.ab}
	\begin{aligned}
	\int_{\theta \e/\alpha}^{2} \frac{H( r)}{r} dr &\le C\Phi(2) + 3 \int_{\theta\alpha}^2 \frac{H(r)}{r} dr\le C\Phi(2),
	\end{aligned}
	\end{equation}
	where we also used (\ref{subeqn:a}) and (\ref{subeqn:d}) in the last inequality. Of course, the constant $C$ above depends on $\theta$ and $\alpha$. This is harmless since they are fixed constants independent of $\e$. In view of (\ref{est.ht}), this gives
	\begin{equation}\label{est.hr}
	h(r) \le C\Phi(2), \qquad \text{for any } r\in (\theta\e/\alpha, 2).
	\end{equation}
	Therefore, for any $t\in (\theta\e/\alpha, 2)$, by (\ref{subeqn:c}), (\ref{est.Hr.ab}) and (\ref{est.hr}),
	\begin{equation*}
	\int_t^{2t} \frac{\Phi(r)}{r} dr \le C_0\int_t^{2t} \frac{H(r)}{r} dr + C_0 \int_t^{2t} \frac{h(r)}{r} dr \le C\Phi(2).
	\end{equation*}
	In view of (\ref{subeqn:d}), this implies that
	\begin{equation}\label{est.Phir}
	\Phi(r) \le C\Phi(2), \qquad \text{for any } r\in (\theta\e/\alpha, 2).
	\end{equation}
	Note that (\ref{est.Hr.ab}) and (\ref{est.Phir}) almost give the desired estimate (\ref{est.iteration}), except for the uncovered interval $(\e,\theta\e/\alpha)$. However, since $\theta /\alpha$ is a fixed number depending only on $C_0, \e_0$ and $\eta$, by repeatedly using (\ref{subeqn:d}) finitely many times, we recover the estimate (\ref{est.Phir}) for $r\in (\e,\theta\e/\alpha)$. Also, using (\ref{subeqn:a}), we recover 
	\begin{equation*}
	\int_{\e}^{\theta\e/\alpha } \frac{H( r)}{r} dr \le C_0\int_{\e}^{\theta\e/\alpha } \frac{\Phi( r)}{r} dr \le C_0(\theta /\alpha-1) \sup_{r\in (\e,\theta\e/\alpha )} \Phi(r) \le C\Phi(2).
	\end{equation*}
	This completes the proof.
\end{proof}

\begin{proof}[Proof of Theorem \ref{thm.main}]
	Let $\Phi$ and $H$ be defined as in  (\ref{def.Phi}) and (\ref{def.H}). Let $h$ be given in Lemma \ref{lem.h.bdry}. Define
	\begin{equation*}
	H^*(r) = H(r) + \zeta(r,\e/r) \Phi(r).
	\end{equation*}
	Then, one sees from Lemma \ref{lem.h.bdry} and \ref{lem.H} that $\Phi, H^*$ and $h$ satisfy the hypothesis of Lemma \ref{lem.iteration} (with $H$ replaced by $H^*$) with $\eta(r,s) = s^{\sigma} + \zeta(r,s)^\sigma + \zeta(\theta r, s/\theta)$ for $r\in (\e,1)$. Now, since $\zeta$ is a $\sigma$-admissible  modulus, then $\eta(r,s)$ is an admissible modulus and Lemma \ref{lem.iteration} implies
	\begin{equation*}
	\sup_{\e\le r\le 1} \Phi(r) \le C \Phi(2).
	\end{equation*}
	Finally, the Poincar\'{e} and the Caccioppoli inequalities lead to the desired estimate.
\end{proof}

\section{Local $\e$-scale Convexity}
The $\e$-scale flat domains do not include convex domains in which the boundary Lipschitz estimate actually exists for scalar elliptic equations due to the maximum (comparison) principle. In this section we consider the domains that are ``nearly convex'' at a certain point above $\e$-scale. This particularly covers both the $\e$-scale flat domains and the convex domains.

\begin{definition}\label{def.eConvex}
	Let $D^\e$ be a domain and $0 \in \partial D^\e$. We say that $D^\e$ is $\e$-scale convex at $0$ in the neighborhood $D^\e_R$ with a modulus $\zeta$, if there exists a domain $Q^\e$, such that $D^\e_R \subset Q^\e, 0\in \partial Q^\e$ and $ Q^\e$ is $\e$-scale flat with modulus $\zeta$.
\end{definition}

Suppose $u_\e\in H^1(D^\e_2)$ is a weak solution of (\ref{eq.main}) in $D^\e_2$. Again, we extend $u_\e$ to a function in $H^1(B_2)$ by zero-extension. By \cite[Lemma 7.6]{GT01}, we know $|u_\e| \in H^1(B_2), \norm{u_\e}_{H^1(B_2)} =\norm{|u_\e|}_{H^1(B_2)} $ and particularly $|u_\e| \in H^{1/2}(\partial B_2)$. Now, let $Q^\e$ be given as in Definition \ref{def.eConvex} and $Q^\e_t = Q^\e\cap B_t(0)$. Let $w_\e$ be the weak solution of
\begin{equation}\label{eq.we.Q2}
\left\{ 
\begin{aligned}
\nabla\cdot (A^\e \nabla  w_\e) &= 0 \qquad &\text{in }& Q^\e_2,\\
w_\e &= |u_\e| \qquad &\text{on }& \partial Q^\e_2.
\end{aligned}
\right.
\end{equation}
To see the well-posedness of the above equation, we let $v_\e$ be the weak solution of
\begin{equation}
\left\{ 
\begin{aligned}
\nabla\cdot A^\e \nabla v_\e &= -\nabla \cdot A^\e\nabla (|u_\e|)  \qquad &\text{in }& Q^\e_2,\\
v_\e &= 0 \qquad &\text{on }& \partial Q^\e_2.
\end{aligned}
\right.
\end{equation}
Clearly, this $v_\e$ exists because of the Lax-Milgram theorem. Then, $w_\e = v_\e + |u_\e|$ solves (\ref{eq.we.Q2}). Moreover, the energy estimate gives $\norm{ w_\e}_{H^1(Q^\e_2)} \le C\norm{u_\e}_{H^1(D^\e_2)}$. Now, the maximal principle implies $w_\e \ge 0$ in $Q_2^\e$. Since $-w_\e = -|u_\e| \le u_\e \le |u_\e| = w_\e$ on $\partial D^\e_2$, the maximal principle yields
\begin{equation}\label{est.ue<we}
|u_\e(x)| \le w_\e(x),\quad \text{for any } x\in D^\e_{2}.
\end{equation}

Now, suppose $\partial Q^\e$ is $\e$-scale flat with a $\sigma$-admissible modulus $\zeta$ for some $\sigma\in (0,1/2)$. Note that in this case, $|Q^\e_t| \simeq |B_t\setminus Q^\e_t| \simeq t^d$. Then Theorem \ref{thm.main} implies that for any $r\in (\e,1)$
\begin{equation*}
\frac{1}{2r}\bigg( \fint_{Q^\e_{2r}} |w_\e|^2 \bigg)^{1/2} \le C \bigg( \fint_{Q^\e_{2r}} |\nabla w_\e|^2 \bigg)^{1/2} \le C \bigg( \fint_{Q^\e_{2}} |\nabla w_\e|^2 \bigg)^{1/2} \le C \bigg( \int_{D^\e_{2}} |\nabla u_\e|^2 \bigg)^{1/2},
\end{equation*}
where we have used the Poincar\'{e} inequality and the fact $|Q^\e_2| \simeq 1$. Now, in view of (\ref{est.ue<we}) and the Caccioppoli inequality, we have
\begin{equation*}
\bigg(\int_{D^\e_{r}} |\nabla u_\e|^2 \bigg)^{1/2} \le \frac{C}{r}\bigg( \int_{D^\e_{2r}} | u_\e|^2 \bigg)^{1/2} \le \frac{C}{r}\bigg( \int_{Q^\e_{2r}} |w_\e|^2 \bigg)^{1/2} \le Cr^{d/2} \bigg( \int_{D^\e_{2}} |\nabla u_\e|^2 \bigg)^{1/2}.
\end{equation*}
This proves the following theorem.

\begin{theorem}
	Suppose $D^\e$ is $\e$-scale convex at $0$ in $D^\e_2(0)$ with a $\sigma$-admissible modulus $\zeta$ for some $\sigma\in (0,1/2)$. Then, for every $r\in (\e,1)$,
	\begin{equation*}
	\bigg( \int_{D^\e_r} |\nabla u_\e|^2 \bigg)^{1/2} \le Cr^{d/2} \bigg( \int_{D^\e_2} |\nabla u_\e|^2 \bigg)^{1/2},
	\end{equation*}
	where $C$ depends only on $\zeta,d,m$ and $\Lambda$.
\end{theorem}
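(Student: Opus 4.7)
The strategy is to dominate $u_\e$ pointwise by a nonnegative scalar solution $w_\e$ posed on the larger, $\e$-scale flat majorant $Q^\e$ furnished by Definition \ref{def.eConvex}, and then apply Theorem \ref{thm.main} to $w_\e$. The argument is specific to the scalar case ($m=1$), where the maximum principle is available.

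First, extend $u_\e$ by zero across $\Delta_2^\e$, so $u_\e,|u_\e|\in H^1(B_2)$ with $|\nabla |u_\e||=|\nabla u_\e|$ a.e.\ and $|u_\e|\equiv 0$ on $B_2\setminus D_2^\e$. Since $D_2^\e\subset Q^\e$, the flat boundary piece $\partial Q^\e\cap B_2$ lies outside $D_2^\e$, so $|u_\e|=0$ there. Construct $w_\e$ as the weak solution of
\begin{equation*}
\nabla\cdot(A^\e\nabla w_\e)=0\ \text{in}\ Q_2^\e,\qquad w_\e=|u_\e|\ \text{on}\ \partial Q_2^\e,
\end{equation*}
via Lax--Milgram applied to $v_\e:=w_\e-|u_\e|\in H_0^1(Q_2^\e)$. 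The standard energy estimate yields $\|\nabla w_\e\|_{L^2(Q_2^\e)}\le C\|\nabla u_\e\|_{L^2(D_2^\e)}$. The maximum principle gives $w_\e\ge 0$ in $Q_2^\e$, and since $-w_\e\le u_\e\le w_\e$ on $\partial D_2^\e$ while $u_\e$ and $\pm w_\e$ all solve the same equation in $D_2^\e$, a second application gives the pointwise bound $|u_\e|\le w_\e$ in $D_2^\e$. Crucially, $w_\e$ vanishes on $\partial Q^\e\cap B_2$, so it satisfies a zero Dirichlet condition along the flat boundary of $Q^\e$.

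I then apply Theorem \ref{thm.main} to $w_\e$ on the $\e$-scale flat domain $Q^\e$: for every $r\in(\e,1)$,
\begin{equation*}
\bigg(\fint_{Q_{2r}^\e}|\nabla w_\e|^2\bigg)^{1/2}\le C\bigg(\fint_{Q_2^\e}|\nabla w_\e|^2\bigg)^{1/2}.
\end{equation*}
Using $|Q_t^\e|\simeq t^d$ (from the flatness of $Q^\e$) together with the energy estimate rearranges this to $\|\nabla w_\e\|_{L^2(Q_{2r}^\e)}\le Cr^{d/2}\|\nabla u_\e\|_{L^2(D_2^\e)}$. The Poincar\'e inequality on $Q_{2r}^\e$, valid because $w_\e$ vanishes on the flat piece $\partial Q^\e\cap B_{2r}$ (of nontrivial $(d-1)$-measure), yields $\|w_\e\|_{L^2(Q_{2r}^\e)}\le Cr\|\nabla w_\e\|_{L^2(Q_{2r}^\e)}$. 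Combining these with $|u_\e|\le w_\e$ and the Caccioppoli inequality (Lemma \ref{lem.Cacc.D}) applied to $u_\e$, which has zero data on $\Delta_{2r}^\e$, produces
\begin{equation*}
\bigg(\int_{D_r^\e}|\nabla u_\e|^2\bigg)^{1/2}\le\frac{C}{r}\bigg(\int_{D_{2r}^\e}|u_\e|^2\bigg)^{1/2}\le\frac{C}{r}\|w_\e\|_{L^2(Q_{2r}^\e)}\le Cr^{d/2}\bigg(\int_{D_2^\e}|\nabla u_\e|^2\bigg)^{1/2},
\end{equation*}
which is the desired bound.

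The main conceptual step is arranging $w_\e$ so that two properties hold at once: the scalar maximum principle furnishes the pointwise domination $|u_\e|\le w_\e$ in $D_2^\e$, and $w_\e$ simultaneously satisfies a homogeneous Dirichlet condition on the flat part of $\partial Q^\e$ so that Theorem \ref{thm.main} can be invoked without modification. Both features rest on the same geometric fact, $D_2^\e\subset Q^\e$, combined with the zero-extension of $u_\e$ outside $D^\e$. The remainder of the argument is a routine chain of energy, Poincar\'e, and Caccioppoli estimates.
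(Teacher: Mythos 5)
Your proposal is correct and follows essentially the same route as the paper: zero-extend $u_\e$, solve for the scalar majorant $w_\e$ in the $\e$-scale flat domain $Q^\e$ with boundary data $|u_\e|$, use the maximum/comparison principle to get $|u_\e|\le w_\e$, apply Theorem \ref{thm.main} to $w_\e$ on $Q^\e$, and conclude via the energy, Poincar\'e, and Caccioppoli inequalities. The only presentational difference is that you make explicit the (correct) observation that $w_\e$ vanishes on $\partial Q^\e\cap B_2$, which the paper leaves implicit.
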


\section*{Conflict of Interest}
The author declares no conflict of interest.

\bibliographystyle{plain}
\bibliography{Zhuge}
\end{document}